\documentclass[10pt, a4paper, reqno, oneside]{amsart}

\setlength{\textwidth}{16.5cm}
\setlength{\textheight}{22.5cm}
      \hoffset -2cm

\usepackage{amsmath, amsopn, amsfonts, amsthm, amssymb, amscd, enumerate, multicol, scalefnt}

\usepackage{amsmath, amsfonts, amssymb, amscd, enumerate, scalefnt}

\usepackage{etex}
\usepackage{hyperref}
\usepackage{t1enc}
\usepackage{graphicx}
\usepackage[all]{xy}
\usepackage{color}
\usepackage{slashed}
\usepackage[mathscr]{euscript}

\usepackage[dvipsnames]{xcolor}
\usepackage{amscd}
\usepackage{color}
\usepackage{amssymb}
\usepackage{amsmath}
\usepackage{amsfonts}
\usepackage{latexsym}
\usepackage{verbatim}

\newcommand\bcdot{\ensuremath{
  \mathchoice
   {\mskip\thinmuskip\lower0.2ex\hbox{\scalebox{1.6}{$\cdot$}}\mskip\thinmuskip}}
   {\mskip\thinmuskip\lower0.2ex\hbox{\scalebox{1.6}{$\cdot$}}\mskip\thinmuskip}
   {\lower0.3ex\hbox{\scalebox{1.2}{$\cdot$}}}
   {\lower0.3ex\hbox{\scalebox{1.2}{$\cdot$}}}
}

\theoremstyle{plain}
\newtheorem{theo}{Theorem}[section]

\newtheorem{prop}[theo]{Proposition}

\newtheorem{cor}[theo]{Corollary}

\theoremstyle{definition}
\newtheorem{rem}[theo]{Remark}
\newtheorem{example}[theo]{Example}
\newtheorem{definition}[theo]{Definition}

\theoremstyle{plain}

\newtheorem{theorem}[theo]{Theorem}

\theoremstyle{definition}

\newtheorem{remark}[theo]{Remark}

\theoremstyle{plain}
\newtheorem{thmint}{Theorem}

\newcommand{\Aac}{\`A}

\newcommand{\g}{\mathfrak{g}}

\renewcommand{\l}{\mathfrak{l}}
\newcommand\C{{\mathbb C}}
\newcommand\R{{\mathbb R}}
\newcommand\Z{{\mathbb Z}}

\newcommand{\wt}[1]{\widetilde{ #1}}

\newcommand{\GHa}{\xrightarrow{\text{\rm GH}}}


\sloppy
\sloppy

\title[HCF on complex locally homogeneous surfaces]{Hermitian Curvature flow on complex locally homogeneous surfaces}
\author{Francesco Pediconi and Mattia Pujia}


\subjclass[2010]{Primary 53C44; Secondary 53C15, 53C30, 53C55}
\keywords{Hermitian Curvature Flow, compact complex surfaces, homogeneous Hermitian metrics}

\thanks{This work was supported by G.N.S.A.G.A. of I.N.d.A.M. The first named author was supported by project PRIN 2017 "Real and Complex Manifolds: Topology, Geometry and holomorphic dynamics" (code 2017JZ2SW5). }


\begin{document}

\begin{abstract}
We study the Hermitian curvature flow of locally homogeneous non-K\"ahler metrics on compact complex surfaces. In particular, we characterize the long-time behavior of the solutions to the flow. We also provide the first example of a compact complex non-K\"ahler manifold admitting a finite time singularity for the Hermitian curvature flow. Finally, we compute the Gromov-Hausdorff limit of immortal solutions after a suitable normalization. Our results follow by a case-by-case analysis of the flow on each complex model geometry.
\end{abstract}

\maketitle


\section{Introduction} 

The {\em Hermitian curvature flow} (HCF shortly) is a strictly parabolic flow of Hermitian metrics introduced by Streets and Tian in \cite{ST2}. The flow evolves an initial Hermitian metric in the direction of its second Chern-Ricci curvature tensor modified with some first order terms in the torsion.

More precisely, let $(X,g_0)$ be a Hermitian manifold. The solution to the HCF starting at $g_0$ is the family of Hermitian metric $g(t)$ satisfying
\begin{equation}\label{HCF0}
\partial_t\, g(t)=-S(g(t))+Q(g(t))\,\,,\qquad g(0)=g_0\,\,,
\end{equation}
where $S(g)$ is the second Chern-Ricci curvature tensor and $Q(g)$ is a $(1,1)$-symmetric tensor which is quadratic in the torsion components of the Chern connection (see Section \ref{prel}). The flow arises from the unique Hilbert-type functional of Hermitian metrics for which $S$ is the leading term of the respective Euler-Lagrange equation. Moreover, when the starting metric is K\"ahler the HCF reduces to the K\"ahler-Ricci flow and, in the compact case, it is stable near K\"ahler-Einstein metrics with non-positive Ricci curvature \cite{ST2}.\medskip


Motivated by the above arguments, we carry out an analysis of the HCF on locally homogeneous compact complex surfaces. Actually, one of the main reasons in studying this new flow is to refine the Enriques-Kodaira classification of compact complex surfaces, as canonical metrics could appear as limit points of the flow (see e.g. \cite{S-conjecture}). \smallskip

Our first main result completely characterizes the long-time behavior of locally homogeneous non-K\"ahler solutions, namely

\begin{thmint}\label{theor_A} Let $X$ be a compact complex surface and $g_0$ a locally homogeneous non-K\"ahler metric on $X$. If the solution to the HCF starting from $g_0$ develops a finite time singularity, then $X$ is a Hopf surface. Conversely, any locally homogeneous solution to the HCF on a Hopf surface collapses in finite time.
\end{thmint}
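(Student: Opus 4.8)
The plan is to reduce the HCF \eqref{HCF0} to a finite-dimensional ODE and then run a case-by-case analysis over the complex model geometries underlying the compact locally homogeneous non-K\"ahler surfaces. First I would observe that the HCF is equivariant under biholomorphic isometries, so that local homogeneity of $g_0$ is preserved along the flow by uniqueness of solutions to the strictly parabolic system \eqref{HCF0}. Lifting to the universal cover $\tilde{X} = G/H$ and working with left-invariant data, the tensors $S(g)$ and $Q(g)$ become algebraic (indeed polynomial) expressions in the metric coefficients and the structure constants of $\g$; hence \eqref{HCF0} descends to an autonomous ODE on the finite-dimensional cone of left-invariant Hermitian inner products compatible with the fixed invariant complex structure $J$. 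It is convenient to phrase this reduction in the bracket-flow formalism, evolving either the inner product or the Lie bracket on $\g$.

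The second ingredient is the classification of compact complex surfaces carrying a locally homogeneous non-K\"ahler metric. Discarding the K\"ahler models (on which the HCF coincides with the K\"ahler-Ricci flow, excluded by hypothesis), one is left with a finite list of model geometries realized as quotients $\Gamma \backslash G$: the Hopf surfaces, the Inoue surfaces, the primary and secondary Kodaira surfaces, and the properly elliptic surfaces. For each of these I would fix explicit structure constants and a compatible $J$, parametrize the left-invariant Hermitian metrics (reducing the number of free parameters using the isotropy and the symmetries of $(\g,J)$), and then compute $S$ and $Q$ explicitly to obtain the corresponding ODE system.

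The heart of the proof is the analysis of these ODE systems. For every non-Hopf model I would establish immortality of the solution, either by integrating the system explicitly or by controlling the diagonal metric coefficients through suitable monotone quantities, showing that they remain positive and finite on every bounded time interval, so that the maximal existence time is $T = +\infty$. For the Hopf models, by contrast, the explicit ODE should force the metric to degenerate at a finite time $T < \infty$: I expect one of the metric directions---morally the fibre direction of the underlying $S^1 \times S^3$ together with the relevant $\mathfrak{su}(2)$ factor---to shrink to zero while the total volume tends to $0$, which is precisely the finite-time collapse asserted in the statement. The two directions of the theorem then follow at once from this dichotomy.

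The main obstacle is the Hopf case: one must not only produce a finite-time singularity but verify that it is a genuine collapse rather than a curvature blow-up at fixed size, which requires a careful qualitative analysis of the limiting behaviour of the metric coefficients (estimating, say, the diameter or the injectivity radius together with the volume as $t \to T$) instead of a mere existence-time computation. A secondary difficulty is organizational: the reduction and the explicit evaluation of $S$ and $Q$ must be carried out uniformly across all the model geometries, and for the non-Hopf cases one needs an a priori argument---rather than case-by-case luck---guaranteeing that no metric coefficient can blow up or vanish in finite time.
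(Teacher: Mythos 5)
Your overall strategy is the paper's: reduce to an autonomous ODE on left-invariant Hermitian metrics via the classification of complex model geometries (Wall), then argue case by case, proving immortality for every non-Hopf model and finite-time collapse for the Hopf model. However, there is a concrete gap in your case enumeration. The hypothesis of Theorem \ref{theor_A} is that the \emph{metric} $g_0$ is non-K\"ahler, not that the \emph{surface} $X$ is non-K\"ahler, so you cannot simply discard all ``K\"ahler models.'' Hyperelliptic surfaces, i.e. compact quotients of $\big(\widetilde{\rm SE}(2)\times\R,J\big)$, admit K\"ahler metrics, but a locally homogeneous metric on them is K\"ahler if and only if the off-diagonal coefficient $z$ vanishes; for $z\neq0$ such a metric is non-K\"ahler and falls squarely under the theorem. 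The paper accordingly includes the hyperelliptic ODE system in its analysis and proves long-time existence there (the determinant $D=xy-|z|^2$ is nondecreasing while $x$, $y$, $|z|^2$ decrease). Your list (Hopf, Inoue, primary and secondary Kodaira, properly elliptic) omits this case, so as written the implication ``finite-time singularity $\Rightarrow$ Hopf'' is unproved for hyperelliptic $X$. What you may discard outright are the tori (every invariant metric on $\C^2$ has $K=0$, hence is a K\"ahler fixed point) and Wall's cases (i)--(ii), where every invariant metric is K\"ahler--Einstein.

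On your anticipated main obstacle: the Hopf case is lighter than you fear, and no diameter or injectivity-radius estimate enters. From the explicit system one has $\dot{x}<0$ and $\dot{u}<0$; if $T=+\infty$, then $\dot{x},\dot{u}\to0$ forces $(x^2+u)/D\to0$, and since $\dot{y}+2\leq 2c\,(x^2+u)/D$ this gives $\lim_{t\to\infty}\dot{y}\leq-2$, contradicting $y>0$. At the finite maximal time $T$, the limit of $D$ cannot be positive and finite (the solution would extend past $T$) and cannot be $+\infty$ (the same estimate would again force $\dot{y}\leq-2$ while $\dot{x}\to0$), so $D\to0$ as $t\to T^-$: the collapse is read off the ODE directly, with no need to rule out a curvature blow-up at fixed volume by geometric means. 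Likewise, the paper has no uniform a priori argument for the non-Hopf cases; it is genuinely case-by-case monotonicity ($\dot{D}>0$ together with boundedness or at most linear growth of the coefficients), so your ``secondary difficulty'' is resolved exactly by the bookkeeping you describe, not by a general principle.
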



It is worth noting that Theorem \ref{theor_A} provides the first example of a compact manifold developing a finite-time singularity for the HCF. Moreover, we restricted our analysis to starting non-K\"ahler metrics since the behavior of K\"ahler solutions is already known (see e.g. \cite{CT,Song-T, TZ}). \smallskip

Our second main result concerns the Gromov-Hausdorff limits of immortal normalized solutions to the HCF, namely

\begin{thmint}\label{theor_B} Let $X$ be a compact complex surface, $g_0$ a locally homogeneous non-K\"ahler metric on $X$ and $g(t)$ the solution to the HCF starting from $g_0$.
\begin{enumerate}[i)]
\item\label{i_B} If $X$ is either a hyperelliptic or Kodaira surface, then $\big(X,(1{+}t)^{-1}g(t)\big)$ converges to a point in the Gromov-Hausdorff topology as $t \to \infty$.
\item\label{ii_B} If $X$ is a non-K\"ahler properly elliptic surface, then $\big(X,(1{+}t)^{-1}g(t)\big)$ converges to its base curve $(C,g_{{}_{\rm KE}})$ in Gromov-Hausdorff topology as $t \to \infty$, where ${\rm Ric}(g_{{}_{\rm KE}})=-g_{{}_{\rm KE}}$.
\item\label{iii_B} If $X$ is an Inoue surface, then $\big(X,(1{+}t)^{-1}g(t)\big)$ converges to a circle in Gromov-Hausdorff topology as $t \to \infty$.
\end{enumerate}
\end{thmint}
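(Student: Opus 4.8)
\emph{Proof proposal.} The plan is to exploit homogeneity to reduce the HCF to a finite-dimensional ODE, to integrate its long-time asymptotics on each model geometry, and finally to upgrade the resulting convergence of invariant metrics to Gromov--Hausdorff convergence of the compact quotients. Since each surface in the statement is non-K\"ahler and is not a Hopf surface, Theorem \ref{theor_A} shows that its solution is immortal, so that $(1{+}t)^{-1}g(t)$ is well defined for all $t\geq 0$. On the universal cover, each such $X$ is modelled on a Lie group $\fG$ with a left-invariant complex structure and is a compact quotient $X=\Gamma\backslash\fG$ by a lattice $\Gamma$; a locally homogeneous Hermitian metric then corresponds to a left-invariant Hermitian inner product on $\g=\operatorname{Lie}(\fG)$. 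As $S$ and $Q$ are natural tensors, they are left-invariant whenever $g$ is, so the flow preserves left-invariance and reduces to an ODE on the finite-dimensional space of invariant metrics; equivalently, one may run the associated bracket flow on the structure constants of $\g$, which is usually the more tractable formulation.

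Next I would integrate this ODE on each geometry and read off the growth rate of every metric component. On the hyperelliptic and Kodaira geometries I expect each component of $g(t)$ to remain $o(t)$, so that the diameter of $(1{+}t)^{-1}g(t)$ tends to $0$. On the non-K\"ahler properly elliptic geometry, $\g$ splits into a horizontal part, tangent to the hyperbolic base curve $C$, and a vertical elliptic fiber; here I anticipate the horizontal block to grow linearly in $t$ while the vertical block stays $o(t)$, reflecting the negative curvature of the base against the flat fibers. On the Inoue geometry, where $\fG$ is solvable of type $\mathrm{Sol}$, I expect the abelian normal ideal to be collapsed by the flow, leaving a single transverse one-dimensional direction surviving at linear rate.

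To turn these asymptotics into the stated limits I would invoke a collapse principle for compact quotients of Lie groups: if, after the $(1{+}t)^{-1}$ rescaling, the invariant metrics collapse along a normal ideal $\h\subset\g$ while descending to a convergent family on the quotient, then $\Gamma\backslash\fG$ converges in the Gromov--Hausdorff topology to the corresponding quotient carrying the limit metric. For the hyperelliptic and Kodaira cases the entire tangent space collapses, giving a point; for the Inoue case the surviving direction integrates to a circle; and for the properly elliptic case the collapsing ideal is the fiber, the quotient is the base curve $C$, and the surviving horizontal block converges to a metric $g_{{}_{\rm KE}}$ on $C$. The identification $\mathrm{Ric}(g_{{}_{\rm KE}})=-g_{{}_{\rm KE}}$ follows because along the horizontal directions the flow agrees with the K\"ahler--Ricci flow of the hyperbolic base, and the factor $(1{+}t)^{-1}$ is exactly the rescaling for which that flow converges to the Einstein metric with this normalization.

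The main obstacle is to make the convergence genuinely Gromov--Hausdorff rather than merely pointwise in the invariant inner products. This requires uniform control, along the flow, of the horizontal distribution and of the intrinsic diameters of the collapsing fibers, and it forces one to pass through equivariant (pointed) Gromov--Hausdorff convergence on the universal cover and then to check that it descends compatibly to $\Gamma\backslash\fG$. A closely related difficulty, specific to case \ref{ii_B}, is to show that the mixed horizontal--vertical components of $g(t)$ decay fast enough not to contaminate the limit, so that the metric induced on $C$ is precisely the one predicted by the reduced horizontal ODE; controlling these cross terms, together with the boundedness estimates feeding the collapse principle, is where I expect the bulk of the technical work to lie.
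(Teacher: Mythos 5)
Your reduction to an ODE on left-invariant metrics and your predicted growth rates match the paper's analysis (Propositions \ref{prop_hyper}, \ref{prop_K1}, \ref{prop_K2}, \ref{prop_ell}, \ref{prop_S0}, \ref{Inoue+}, \ref{Inoue-}), and cases \eqref{i_B} and \eqref{ii_B} would close along your lines. In \eqref{ii_B} the technical worries you flag are in fact mild: the ODE analysis gives $y(t)<y_0$, $u(t)=|z(t)|^2<u_0$ and $x(t)\sim 2t$, so the rescaled metric converges to $2\,\zeta^1\otimes\bar\zeta^1$, which the paper identifies \emph{directly} as $\pi^*g_{{}_{\rm KE}}$ (no appeal to the K\"ahler--Ricci flow on the base is needed, though your heuristic does explain the constant); the Gromov--Hausdorff statement then follows from the elementary $\epsilon$-approximation $(\pi,f)$, with $f$ any set-theoretic section of the elliptic fibration, so no equivariant pointed GH machinery is required.

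The genuine gap is in case \eqref{iii_B}. The collapse principle you invoke --- rescaled metrics degenerate along a normal ideal $\h\subset\g$, hence $\Gamma\backslash \fG$ converges to the quotient carrying the limit metric --- fails as stated for Inoue surfaces, and your description of the asymptotics there is also off. For $S^0$ the surviving coefficient is $y(t)\sim 8a^2t$ on $Z_2=(X_3-\sqrt{-1}X_4)/\sqrt2$, so the limit tensor $\tilde g_\infty$ is positive on the \emph{two} real directions $X_3,X_4$, of which $X_3$ is tangent to the $T^3$ fibers of $\pi:X\to S^1$; only $\h={\rm span}(X_1,X_2)$ is killed. Your principle would then predict a two-dimensional limit, namely $\fG/\exp\h$ with the induced metric modulo $\Gamma$ --- but the image of $\Gamma$ in $\fG/\exp\h$ is \emph{not discrete}: the eigenvector coordinates $v_i,w_i$ of the matrix $A\in{\rm SL}(3,\Z)$ are irrational in the relevant sense, so the leaves of the kernel distribution $\mathcal D={\rm span}(X_1,X_2)$ are dense inside each fiber. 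It is precisely this density (the analogue of Lemma 5.2 in \cite{TW}; cf. \cite[Cor 3.18]{Bol}) that makes intra-fiber distances collapse even though $\tilde g_\infty$ does not vanish on all fiber directions, yielding the circle of length $2\sqrt2 a$ via the section $s\mapsto M(0,0,0,s)$, whose $g(t)$-length is $\sqrt{y(t)}$. The same phenomenon occurs for $S^\pm$, where the kernel is spanned by the real and imaginary parts of $Z_2$ and is again dense in the fibers. Without this density argument --- equivalently, without identifying the limit as the metric quotient of $\fG/\exp\h$ by the \emph{closure} of the image of $\Gamma$ --- your proof of \eqref{iii_B} does not close, and the principle as you state it would produce the wrong limit space.
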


We point out that the arguments used to prove \eqref{ii_B} and \eqref{iii_B} in  Theorem \ref{theor_B} are analogous to those used by Tosatti and Weinkove in \cite{TW} for the {\it Chern-Ricci flow} (see also \cite{FTWZ, TW0, TWY}), and the limit spaces arising in our context are the same. We highlight that  cohomological aspects of compact complex surfaces along the Chern-Ricci flow were investigated in \cite{Ang}, and it would be interesting to carry out a similar analysis also for the HCF. \smallskip

Our results can be thought of as a first step in the study of the HCF on complex non-K\"ahler surfaces. In the same spirit of \cite{Bol} and \cite{Lott}, we expect the blowdown of any immortal locally homogeneous solution to converge to an expanding soliton. Nonetheless, at the moment we are not able to confirm this statement. In this direction, in \cite{LPV} the second named author, Lafuente and Vezzoni proved that long-time existence of left-invariant solutions to the HCF is always guaranteed on complex unimodular Lie groups and such solutions always converge under a suitable normalization to an expanding algebraic soliton (see also \cite{P-DGA}). \medskip

We stress that different choices of the tensor $Q$ in \eqref{HCF0} give rise to an interesting family of geometric flows generalizing the Ricci flow to the Hermitian non K\"ahler setting. In particular one can choose $Q$ to preserve different geometric conditions, making each of these new flows well-suited to investigate a certain problem. Among these, one of the most studied is the {\em pluriclosed flow} (PCF shortly), which preserves the pluriclosed condition $\partial \bar \partial \omega=0$ (see e.g. \cite{S3, S-solitons, S-conjecture, ST, ST4}). In \cite{Bol} Boling studied the pluriclosed flow on complex surfaces proving long-time existence and convergences results; while, In \cite{AL} Arroyo and Lafuente showed that normalized left-invariant solutions to the PCF on $2$-step nilmanifolds and almost-abelian Lie groups always converge to expanding solitons (see also \cite{EFV2,PV}). 

Let us also mention that recently Ustinovskiy \cite{yuri, yuri2} found a new flow in the HCF family which preserves both the Griffiths-positivity and a finite dimensional space of distinguished metrics called {\it induced metrics}. We mention that related works have been recently appeared (see e.g. \cite{PodPan, P20}) and it would be interesting to analyze Ustinovkiy's flow on compact complex surfaces in the same fashion as we did in this paper. \medskip

The paper is organized as follows. In Section \ref{prel} we recall some basics on HCF, complex model geometries and Gromov-Hausdorff convergence. In Section \ref{model_sub} we explicitly compute the HCF tensor of each compact complex surface considered throughout the paper. In Section \ref{mainsec} we prove Theorem \ref{theor_A} and Theorem \ref{theor_B} by a case-by-case analysis of the involved equations. Finally, in the Appendix we explicitly write the components of the HCF tensors $K$ for our class of surfaces. \bigskip

\noindent{\it Acknowledgement.} We warmly thank Daniele Angella, Alberto Raffero and Luigi Vezzoni for their interest and helpful comments. We also thank the anonymous referee for his/her useful suggestions, which improved the presentation of the paper.

\medskip

\section{Preliminaries} \label{prel}

\subsection{Basics on HCF} \hfill \par

In the sequel, we describe the evolution equation of the Hermitian Curvature Flow on a complex manifold $X=(M,J)$. Given a Hermitian metric $g$ on $X$, we denote by $\nabla$ its Chern connection, by $\Omega$ its Chern curvature tensor $\Omega(X,Y):=[\nabla_X,\nabla_Y]-\nabla_{[X,Y]}$ and by $S$ its second Chern-Ricci curvature (we use the same convention adopted in \cite{ST2}), i.e.
$$
S_{i\bar j} := g^{\bar{\ell}k}\Omega_{k\bar{\ell}i\bar{j}} \,\, .
$$
Let also $T$ be the torsion of $\nabla$ and consider the tensor $Q=Q_{i\bar{j}}$ defined by
\begin{equation}
Q := \frac12Q^1 - \frac14Q^2- \frac12Q^3 + Q^4 \,\, , \label{defQ}
\end{equation}
where
\begin{equation} \begin{aligned}
&Q^1_{i\bar{j}}:=g^{\bar{\ell}k}g^{\bar{q}m}T_{ik\bar{q}}T_{\bar{j}\bar{\ell}m} \,\, , \quad Q^2_{i\bar{j}}:=g^{\bar{\ell}k}g^{\bar{q}m}T_{km\bar{j}}T_{\bar{\ell}\bar{q}i} \,\, , \\
&Q^3_{i\bar{j}}:=g^{\bar{\ell}k}g^{\bar{q}m}T_{ik\bar{\ell}}T_{\bar{j}\bar{q}m} \,\, , \quad Q^4_{i\bar{j}}:=\frac12g^{\bar{\ell}k}g^{\bar{q}m}(T_{mk\bar{\ell}}T_{\bar{q}\bar{j}i} +T_{\bar{q}\bar{\ell}k}T_{mi\bar{j}}) \,\, .
\end{aligned} \label{defQi} \end{equation}
Notice that in the formulas above
$$
T_{ij\bar k}:=g_{\ell\bar{k}}T_{ij}^{\ell} \,\, , \quad T_{\bar{i}\bar{j}k}:=g_{k\bar{\ell}}T_{\bar{i}\bar{j}}^{\bar{\ell}} \,\, , \quad (g^{i\bar{j}}):=(g_{i\bar{j}})^{-1} \,\, .
$$ 
Then, given a Hermitian metric $g_0$ on $X$, the evolution equation of the HCF on $X$ starting from $g_0$ is given by
\begin{equation}\label{HCF}
\partial_tg(t)=-K(g(t))\,,\qquad g(0)=g_0\,,
\end{equation}
where $K:=S-Q$. Henceforth, we will refer to $K$ as the {\it HCF tensor}.

\subsection{HCF tensor on Lie groups} \hfill \par

Let $(G,J,g)$ be a real Lie group $G$ equipped with a left-invariant Riemannian metric $g$ and a left-invariant complex structure $J$ such that $g(J\cdot\,,J\cdot\,)=g(\cdot\,,\cdot\,)$. Let also $\g:={\rm Lie}(G)$ and 
$$
\mu \in \Lambda^2\g^* \otimes \g \,\, , \quad \mu(X,Y):=[X,Y] \,\, .
$$ 
In the following, we compute the components of the HCF tensor in terms of the structure constants of $\g$. 

Let $\{Z_1,\ldots,Z_n\}$ be a left-invariant frame of $T^{1,0}G$. Since the Chern connection is the unique Hermitian connection with vanishing (1,1)-part of the torsion, it follows that
$$
\nabla_{\bar Z_{k}}Z_{\ell}=\nabla_{Z_\ell} \bar{Z}_k+\mu(\bar{Z}_k,Z_{\ell})
$$
or, in terms of the Christoffel symbols of $\nabla$
$$
\Gamma_{\bar k\ell}^r=\mu_{\bar k\ell}^r\,\,,\quad \Gamma_{k\bar \ell}^{\bar r}=\mu_{ k \bar \ell}^{\bar r}\,\,.
$$
On the other hand $\nabla J=\nabla g=0$ implies
$$
g(\nabla_{Z_k}Z_i,\bar{Z}_j)=-g(Z_i,\nabla_{Z_k}\bar{Z}_j)=-g(Z_i,\mu(Z_k,\bar{Z}_j))
$$
and hence
\begin{equation}
\Gamma_{ki}^{s}=-g^{\bar{j}s}g_{i\bar{p}}\,\mu_{k\bar{j}}^{\bar{p}} \,\, .
\label{Gkis} \end{equation}
By definition, we have 
$$
\Omega_{k\bar{\ell}i\bar{j}}=g(\nabla_{Z_k}\nabla_{\bar Z_{\ell}}Z_i,\bar{Z}_j)-g(\nabla_{\bar Z_{\ell}}\nabla_{Z_k}Z_i,\bar{Z}_j)-g(\nabla_{\mu(Z_k,\bar{Z}_{\ell})}Z_i,\bar{Z}_j) \,\, ,
$$
with 
$$
\begin{gathered}
g(\nabla_{Z_k}\nabla_{\bar Z_{\ell}}Z_i,\bar{Z}_j)=g_{p\bar{j}}\Gamma_{\bar{\ell}i}^r\Gamma_{kr}^{p}=g_{p\bar{j}}\mu_{\bar{\ell}i}^r\Gamma_{kr}^{p} \,\, , \\
g(\nabla_{\bar Z_{\ell}}\nabla_{Z_k}Z_i,\bar{Z}_j)=g_{p\bar{j}}\Gamma_{ki}^r \Gamma_{{\bar{\ell}}r}^p=g_{p\bar{j}}\mu_{{\bar{\ell}}r}^p\Gamma_{ki}^r \,\, , \\
g(\nabla_{\mu(Z_k,\bar{Z}_{\ell})}Z_i,\bar{Z}_j)=g_{p\bar{j}}(\mu_{k\bar{\ell}}^r\Gamma_{ri}^p+\mu_{k\bar{\ell}}^{\bar{r}}\mu_{\bar{r}i}^{p}) \,\, .
\end{gathered}
$$
Thus, the second Chern-Ricci curvature $S$ takes the form
\begin{equation*}
S_{i\bar j}=g^{\bar{\ell}k}g_{p\bar{j}}(\mu_{\bar{\ell}i}^r\Gamma_{kr}^{p}-\mu_{{\bar{\ell}}r}^p\Gamma_{ki}^r -\mu_{k\bar{\ell}}^r\Gamma_{ri}^p-\mu_{k\bar{\ell}}^{\bar{r}}\mu_{\bar{r}i}^{p})\,\,,k
\end{equation*}
or, equivalently,
\begin{equation*}
S_{i\bar j}=-g^{\bar{\ell}k}g_{p\bar{j}}(g^{p\bar v}g_{r\bar q}\,\mu_{k\bar v}^{\bar q}\mu_{\bar{\ell}i}^r-g^{r\bar v}g_{i\bar q}\,\mu_{k\bar v}^{\bar q}\mu_{{\bar{\ell}}r}^p -g^{p\bar v}g_{i\bar q}\,\mu_{r\bar v}^{\bar q}\mu_{k\bar{\ell}}^r+\mu_{k\bar{\ell}}^{\bar{r}}\mu_{\bar{r}i}^{p})\,\,.
\end{equation*}
On the other hand, since $T_{ij}=\nabla_{Z_i}Z_j-\nabla_{Z_j}Z_i-\mu(Z_i,Z_j)$, from \eqref{Gkis} we have
$$
T_{ij}^k= -g^{\bar qk} g_{j\bar a } \mu_{\bar qi}^{\bar a}+g^{\bar qk} g_{i\bar a } \mu_{\bar qj}^{\bar a}-\mu_{ij}^k
$$
and hence
\begin{equation}
T_{ij\bar{k}}=-g_{j\bar a } \mu_{\bar{k}i}^{\bar a}+g_{i\bar a } \mu_{\bar{k}j}^{\bar a}- g_{m\bar{k}}\mu_{ij}^m\,\,.
\label{Tijk} \end{equation}
Therefore, the explicit expression of the tensor $Q$ can be recovered from \eqref{defQ}, \eqref{defQi} and \eqref{Tijk}. Indeed,
$$\begin{aligned}
Q^1_{i\bar{j}}&=g^{\bar{\ell}k}g^{\bar{q}m}(-g_{k\bar a}\mu_{\bar q i}^{\bar a}+g_{i\bar a }\mu_{\bar q k}^{\bar a}-g_{v\bar q}\mu_{ik}^{v})(-g_{b\bar \ell}\mu_{m\bar j}^{b}+g_{b\bar j}\mu_{m\bar \ell}^{b}-g_{m\bar r}\mu_{\bar j\bar \ell}^{\bar r})\,\,,\\
Q^2_{i\bar{j}}&=g^{\bar{\ell}k}g^{\bar{q}m}(-g_{m\bar a }\mu_{\bar j k}^{\bar a}+g_{k\bar a}\mu_{\bar j m}^{\bar a}-g_{v\bar j}\mu_{km}^{v})(-g_{b\bar q}\mu_{i\bar \ell}^{b}+g_{b\bar \ell}\mu_{i\bar q}^{b}-g_{i\bar r}\mu_{\bar \ell\bar q}^{\bar r})\,\,,\\
Q^3_{i\bar{j}}&=g^{\bar{\ell}k}g^{\bar{q}m}(-g_{k\bar a }\mu_{\bar \ell i}^{\bar a}+g_{i\bar a }\mu_{\bar \ell k}^{\bar a}-g_{v\bar \ell}\mu_{ik}^{v})(-g_{b\bar q}\mu_{m\bar j}^{b}+g_{b\bar j}\mu_{m\bar q}^{b}-g_{m\bar r}\mu_{\bar j\bar q}^{\bar r})\,\,,\\
2Q^4_{i\bar{j}}&=g^{\bar{\ell}k}g^{\bar{q}m}\Big[(-g_{k\bar a }\mu_{\bar \ell m}^{\bar a}+g_{m\bar a}\mu_{\bar \ell k}^{\bar a}-g_{v\bar \ell}\mu_{mk}^{v})(-g_{b\bar j}\mu_{i\bar q}^{b}+g_{b\bar q}\mu_{i\bar j}^{b}-g_{i\bar r}\mu_{\bar q\bar j}^{\bar r})\\
&\qquad\qquad+(-g_{i\bar a}\mu_{\bar j m}^{\bar a}+g_{m\bar a}\mu_{\bar j i}^{\bar a}-g_{v\bar j}\mu_{mi}^{v})(-g_{b\bar \ell}\mu_{k\bar q}^{b}+g_{b\bar q}\mu_{k\bar \ell}^{b}-g_{k\bar r}\mu_{\bar q\bar \ell}^{\bar r})\Big]\,\,.
\end{aligned}
$$

\subsection{Complex model geometries} \hfill \par

In this subsection, we recall some basics about the geometry of locally homogeneous Hermitian manifolds. In particular, we focus on compact locally homogeneous Hermitian surfaces. \smallskip

A Hermitian manifold $(X,g)$ is {\it locally homogeneous} if the pseudogroup of local automorphisms of $(X,g)$ acts transitively on $X$, i.e. for any choice of $x, y \in X$ there exist neighborhoods $U_x,U_y \subset X$ of $x$ and $y$, respectively, and a holomorphic local isometry $f : U_x\rightarrow U_y$ such that $f(x) = y$. If in addiction $(X,g)$ is compact, then its universal Hermitian covering $(\wt{X},g)$ is {\it globally homogeneous} (see \cite{Singer}) and hence it admits a left coset presentation $\wt{X} = G/H$ for some closed subgroup $G \subset {\rm Aut}(\wt{X},g)$. Here, with a slight abuse of notation, we denote by $g$ both the Hermitian metric on $X$ and its pullback on the universal cover $\wt{X}$.

Motivated by this, we recall the following

\begin{definition}
A {\it complex model geometry of dimension $n$} is a pair $(\wt{X},G)$ given by a connected, simply-connected $n$-dimensional complex manifold $\wt{X}$ and a real connected Lie group $G$ such that: \begin{itemize}
\item[$\bcdot$] $G$ acts properly, transitively and almost-effectively by biholomorphisms on $\wt{X}$;
\item[$\bcdot$] $G$ contains a discrete subgroup $\Gamma \subset G$ with $\Gamma\backslash \wt{X}$ compact.
\end{itemize} If $G$ is a minimal group with such properties, then the complex model geometry is said to be {\it minimal}.
\end{definition}

Let $(\wt{X},G)$ be a complex model geometry. A Hermitian manifold $(X,g)$ has {\it geometric structure of type $(\wt{X},G)$} if $\wt{X}$ is the universal cover of $X$ and the pulled-back metric $g$ on $\wt{X}$ is invariant under the action of $G$. Of course, if $(X,g)$ has a geometric structure, then it is locally homogeneous. On the other hand, by the previous observation, any compact locally homogeneous Hermitian manifold has geometric structure of type $(\wt{X},G)$ for some {\it minimal} complex model geometry $(\wt{X},G)$. \smallskip

By the Riemann Uniformization Theorem, it is known that there exist exactly three minimal complex model geometries of dimension $1$, that are $$(\C,\C) \,\, , \quad (\C P^1,{\rm SU(2)}) \,\, , \quad (\C H^1,{\rm SU(1,1)}) \,\, .$$ Here, the group $G$ acts on the respective space $\wt{X}$ in the standard way.

Subsequently in \cite{Wall, Wall2} Wall classified all complex model geometries of dimension $2$. In particular, he proved the following

\begin{theorem}[\cite{Wall, Wall2}]\label{theor-Wall} If $(\wt{X},G)$ is a minimal complex model geometry of dimension 2, then one of the following cases occurs:
\begin{itemize} 
\item[i)] $(\wt{X},G)=(\wt{X}_1\times\wt{X}_2,G_1\times G_2)$ is the product of two complex model geometries of dimension $1$.
\item[ii)] $(\wt{X},G)=(\C P^2,{\rm SU}(3))$ or $(\wt{X},G)=(\C H^2,{\rm SU}(2,1))$, both considered endowed with the standard action of $G$ on $\wt{X}$.
\item[iii)] $\wt{X}=(G,J)$ where $G$ acts on itself by left translations and $J$ is a left-invariant complex structure.
\end{itemize} \label{model}\end{theorem}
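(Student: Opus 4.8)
The plan is to study the geometry through the infinitesimal data of the transitive action. Since $G$ acts properly and transitively, the stabilizer $H=G_o$ of a point $o\in\wt{X}$ is compact and $\wt{X}\cong G/H$; writing $\g=\operatorname{Lie}(G)$, $\h=\operatorname{Lie}(H)$ and fixing an $\operatorname{Ad}(H)$-invariant splitting $\g=\h\oplus\mathfrak{m}$ with $\mathfrak{m}\cong T_o\wt{X}$, the invariant complex structure $J$ turns $\mathfrak{m}$ into a $2$-dimensional complex vector space on which the isotropy representation $H\to\fGL(\mathfrak{m})$ acts $\C$-linearly. Almost-effectiveness forces this representation to have discrete kernel, so $H$ is, up to a finite cover, a compact subgroup of $\fGL(2,\C)$, hence conjugate into $\fU(2)$. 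I would then organize the whole argument according to whether the isotropy representation $H\hookrightarrow\fU(2)$ is irreducible or reducible.

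First I would treat the irreducible case. Invoking the classification of isotropy-irreducible homogeneous spaces in real dimension four, together with the requirement that $\wt{X}$ carry a $G$-invariant complex structure (which already rules out, for instance, the sphere $S^4$), the only surviving candidates are the Hermitian symmetric spaces $\C P^2=\fSU(3)/\fU(2)$, its noncompact dual $\C H^2=\fSU(2,1)/\fU(2)$, and the flat $\C^2$. The flat model would be excluded by minimality, since the minimal group acting on $\C^2$ is the abelian group of translations, placing $\C^2$ in case i) (equivalently case iii)). The remaining two give case ii), and one checks that $\C P^2$ is already compact while $\C H^2$ admits compact quotients by arithmetic lattices in $\fSU(2,1)$.

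Next I would treat the reducible case. Here a maximal torus of $H$ preserves a splitting $\mathfrak{m}=\mathfrak{m}_1\oplus\mathfrak{m}_2$ into $J$-invariant complex lines, and I would separate two sub-cases. If this splitting comes from a decomposition of $\g$ as a sum of two ideals, each defining a $1$-dimensional complex model geometry, then $\wt{X}$ splits $G$-equivariantly as a holomorphic and Riemannian product, and the Uniformization Theorem identifies the factors among $(\C,\C)$, $(\C P^1,\fSU(2))$ and $(\C H^1,\fSU(1,1))$, yielding case i). If no such product decomposition exists, I would use the minimality of $G$ to reduce the isotropy as far as possible; the reducible, non-product configuration forces the minimal group to act with discrete stabilizer, i.e.\ simply transitively, so that $\wt{X}$ is diffeomorphic to the simply-connected group $G$ and the invariant complex structure becomes left-invariant, giving case iii).

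Throughout, the lattice $\Gamma$ plays a crucial role: any Lie group admitting a discrete subgroup with compact quotient is unimodular, which eliminates many homogeneous models and constrains the admissible structure constants in case iii). I expect the main obstacle to be the reducible case, specifically the clean separation of genuine product geometries from left-invariant ones: one must prove that the $H$-invariant splitting of $\mathfrak{m}$ integrates to a global holomorphic product exactly when $\g$ decomposes as a direct sum of ideals, and then exploit minimality to collapse the residual isotropy to a discrete group without destroying the compact-quotient condition. Determining precisely which of the resulting unimodular Lie groups carry lattices is the most delicate part of the bookkeeping.
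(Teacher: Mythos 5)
First, a point of calibration: the paper offers no proof of this statement at all --- it is quoted verbatim from Wall \cite{Wall, Wall2}, so your proposal can only be measured against Wall's classification itself. Your skeleton (properness gives compact isotropy $H$, averaging puts $H$ inside $\fU(2)$ acting $\C$-linearly on $\mathfrak{m}\cong T_o\wt{X}$, then a trichotomy by the isotropy representation) is a sensible organization and is close in spirit to Wall's own analysis, which proceeds by the dimension of the stabilizer. The irreducible branch is essentially fine: irreducibility plus an invariant $J$ forces a Hermitian symmetric situation, and $S^4$ and $\R H^4$ drop out because their full isotropy $\fSO(4)$ centralizes no complex structure, which your reduction to $\fU(2)$ already encodes.

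The genuine gap is in your reducible, non-product branch, where you assert that minimality ``forces the minimal group to act with discrete stabilizer, i.e.\ simply transitively.'' That implication is false as stated. The geometry $F^4=\bigl(\R^2\rtimes\widetilde{\fSL}(2,\R)\bigr)/\fSO(2)$ carries an invariant complex structure, has reducible one-dimensional isotropy, is not a product of one-dimensional geometries, and is minimal --- yet its stabilizer is not discrete. It is excluded from the list of complex \emph{model} geometries solely because it admits no \emph{compact} quotient (it has finite-volume but non-compact quotients), and note that its group is unimodular, so your observation that cocompact lattices force unimodularity does not see this case at all. Thus the cocompactness hypothesis must be deployed at precisely the point where you rely on minimality alone, and proving the nonexistence of a cocompact lattice for $F^4$ is a substantive argument, not bookkeeping. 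Two further steps are asserted rather than proved: that the $H$-invariant splitting $\mathfrak{m}=\mathfrak{m}_1\oplus\mathfrak{m}_2$ integrates to a global holomorphic product exactly when $\g$ splits as a sum of ideals (this needs an invariant metric and a de Rham--type holonomy argument, and the ``only if'' direction is delicate), and the determination in case iii) of which simply-connected $4$-dimensional groups actually carry left-invariant complex structures together with cocompact lattices --- which is the bulk of Wall's actual work, and which your last paragraph correctly flags as delicate but does not supply.
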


\begin{rem} If $(\wt{X},G)$ is one of the model listed in (i) or (ii) above, then any Hermitian $G$-invariant metric on $\wt{X}$ is necessarily K\"ahler-Einstein.
\label{remKE} \end{rem}

\subsection{Gromov-Hausdorff convergence} \label{secGH} \hfill \par

We collect here some basic facts about Gromov-Hausdorff convergence of compact metric spaces. We refer to \cite[Sec. 7.3.2]{BBI} and \cite{Rong} for more details. 

\smallskip
Let $Z=(Z,d_Z)$ be a metric space and $X,Y \subset Z$ two compact subsets. The {\it Hausdorff distance} between $X$ and $Y$ is given by 
$$
{\rm dist}_{{}_{\rm H}}^Z(X,Y) := \inf\big\{\epsilon>0 : X \subset B_{\epsilon}(Y) \, , \,\, Y \subset B_{\epsilon}(X) \big\} \,\, ,
$$ 
where $B_{\epsilon}(X):=\{x \in Z : d_Z(x,X)<\epsilon\}$ is the $\epsilon$-tube of $X$ in $Z$. The pair
$$
\left(\{\text{compact subsets of $Z$}\},{\rm dist}_{{}_{\rm H}}^Z\right)
$$ 
is also a metric space and it is compact if and only if $Z$ is compact as well. \smallskip

Let now $X=(X,d_X)$, $Y=(Y,d_Y)$ be two compact metric spaces. The {\it Gromov-Hausdorff distance} between $X$ and $Y$ is defined as 
$$
{\rm dist}_{{}_{\rm GH}}(X,Y) := \inf\left\lbrace{\rm dist}_{{}_{\rm H}}^Z\left(\phi_1(X),\phi_2(Y)\right) \right\rbrace \,\, ,
$$
where the infimum is taken with respect to all metric spaces $Z$ and all pairs $(\phi_1,\phi_2)$ of isometric embeddings $\phi_1: X \to Z$ and $\phi_2: Y \to Z$. Letting $\mathcal{X}$ denote the set of isometric classes of compact metric spaces, it turns out that $\left(\mathcal{X},{\rm dist}_{{}_{\rm GH}}\right)$ is a complete metric space. Therefore, given a one-parameter family $\{X_t\}_{t \in [0,T)}$ and an element $Y$ both in $\mathcal{X}$, whenever $\lim_{t \to T^-}{\rm dist}_{{}_{\rm GH}}(X_t,Y)=0$ we write 
$$ 
X_t \GHa Y \quad \text{ as } \, t \to T^-
$$ 
and we say that $X_t$ {\em convergences in the Gromov-Hausdorff topology} to $Y$. \smallskip

Finally, a {\it GH $\epsilon$-approximation} between two metric spaces $X,Y \in \mathcal{X}$, with $\epsilon>0$, is a pair of non-necessarily continuous maps $\varphi: X \to Y$ and $\psi: Y \to X$ satisfying for any $x,x' \in X$ and $y,y' \in Y$
$$
\begin{gathered}
\big|d_X(x,x')-d_Y(\varphi(x),\varphi(x'))\big|<\epsilon \,\, , \quad d_X(x,(\psi\circ\varphi)(x))<\epsilon \,\, ,\\
\big|d_Y(y,y')-d_X(\psi(y),\psi(y'))\big|<\epsilon \,\, , \quad d_Y(y,(\varphi\circ\psi)(y))<\epsilon \,\, .
\end{gathered}
$$ Remarkably, if there exists a GH $\epsilon$-approximation $(\varphi,\psi)$ between $X$ and $Y$, then ${\rm dist}_{{}_{\rm GH}}(X,Y) \leq \frac32\epsilon$ (see e.g. \cite[Lemma 1.3.3]{Rong}).

\section{HCF tensor on complex model geometries} \label{model_sub}

The aim of this section is to compute the HCF tensor $K$ of any $2$-dimensional complex model geometry $(\wt{X},G)$ endowed with an invariant metric $g$. By means of Remark \ref{remKE}, we will restrict our discussion to those minimal complex model geometries arising from (iii) in Theorem \ref{model}. Hence, following \cite[Sec. 2.2]{Bol}, we list below all the connected, simply-connected real 4-dimensional Lie groups which admits a left-invariant complex structure, their compact quotients according to Enriques-Kodaira classification and their HCF tensors. We mention here that all the computations were made with the help of the software Maple. \smallskip

In the following, given a connected, simply connected $4$-dimensional real Lie groups $(G,J)$ equipped with a left-invariant complex structure, we will consider a fixed left-invariant $(1,0)$-frame $\{Z_1,Z_2\}$ and we will denote by $\{\zeta^1,\zeta^2\}$ its dual frame. This allows us to write any left-invariant Hermitian metric $g$ on $(G,J)$ in the form
\begin{equation}
g= x\, \zeta^1\odot \bar \zeta^1 + y\, \zeta^2\odot \bar \zeta^2 + z\, \zeta^1\odot \bar \zeta^2 + \bar z\, \zeta^2\odot \bar \zeta^1 \,\, , \label{leftinvmetric}
\end{equation} with $x,y \in \R_{>0}$, $z \in \C$ and $xy-|z|^2>0$.

\begin{remark} We refer to the Appendix for an explicit computation of the HCF tensors written in the following. 
\end{remark}

\subsubsection*{Complex tori} \hfill \par

The Lie group is $G=\R^4$, which is abelian and admits a unique left-invariant complex structure $J_{\rm st}$. In this case, the HCF tensor of any left-invariant metric on $\C^2=(\R^4,J_{\rm st})$ is just $K=0$. Compact quotients of $\C^2$ are complex tori.

\subsubsection*{Hyperelliptic surfaces} \hfill \par

The Lie group is $G=\widetilde{\rm SE}(2)\times\R$, where $\widetilde{\rm SE}(2)$ is the universal cover of the special Euclidean group ${\rm SE}(2) := {\rm SO}(2) \ltimes \R^2$. It admits a unique left-invariant complex structure $J$ and the structure constants $\mu$ of its complexified Lie algebra are $$
\mu(Z_1, Z_2)=Z_1 \,\, , \quad \mu(Z_1,\bar Z_2)=-Z_1 \,\, .
$$ The HCF tensor of a left invariant Hermitian metric on $\big(\widetilde{\rm SE}(2)\times\R,J\big)$ is given by
$$
K_{1\bar 1}  = \frac {x^2|z|^2}{(xy-|z|^2)^2} \,\, , \quad K_{2\bar 2}  = \frac {|z|^4}{(xy-|z|^2)^2} \,\, , \quad K_{1\bar 2}  = \frac {x^2yz}{(xy-|z|^2)^2} \,\, .
$$
Compact quotients of $\big(\widetilde{\rm SE}(2)\times\R,J\big)$ are hyperelliptic surfaces, which admit K\"ahler metrics.

\subsubsection*{Hopf surfaces} \hfill \par

The Lie group is $G={\rm SU(2)}\times\R$. It admits a one-parameter family $J_{\lambda}$ of left-invariant complex structures, with $\lambda \in \R$, and with respect to $J_\lambda$ the structure constants $\mu=\mu_{\lambda}$ of its complexified Lie algebra are
$$
\mu(Z_1, Z_2)=Z_2 \,\, , \quad \mu(Z_1,\bar Z_2)=-\bar Z_2 \,\, , \quad \mu(Z_2,\bar Z_2)=(-1+\sqrt{-1}\lambda)Z_1+(1+\sqrt{-1}\lambda)\bar Z_1 \,\, .
$$
The HCF tensor of a left-invariant Hermitian metric on $\big({\rm SU(2)}\times\R,J_{\lambda}\big)$ is given by
\begin{equation*}\left.\begin{aligned}
K_{1\bar 1} &= \frac {x^4(1+\lambda^2)+|z|^2(2x^2+|z|^2)}{(xy-|z|^2)^2} \\
K_{2\bar 2} &= \frac {(1+\lambda^2)x^2|z|^2+2(xy-|z|^2)^2+|z|^2(y^2+2|z|^2)-2(1+\lambda^2)x^2(xy-|z|^2)}{(xy-|z|^2)^2}  \\
K_{1\bar 2} &= \frac {xz(\lambda^2x^2+(x+y)^2)}{(xy-|z|^2)^2}
\end{aligned}\right. . \end{equation*} 
Compact quotients of $\big({\rm SU(2)}\times\R,J_{\lambda}\big)$ are Hopf surfaces, which are non-K\"ahler.

\subsubsection*{Non-K\"ahler properly elliptic surfaces} \hfill \par

The Lie group is $G=\widetilde{\rm SL}(2,\R)\times\R$, where $\widetilde{\rm SL}(2,\R)$ is the universal cover of ${\rm SL}(2,\R)$. It admits a one-parameter family $J_{\lambda}$ of left-invariant complex structure, with $\lambda \in \R$, with respect to which the structure constants $\mu=\mu_{\lambda}$ of its complexified Lie algebra are
$$
\mu(Z_1, Z_2)=\sqrt{-1}Z_1 \,\, , \quad \mu(Z_1,\bar Z_2)=\sqrt{-1} \bar Z_1 \,\, , \quad \mu(Z_1,\bar Z_1)=(-\lambda+\sqrt{-1})Z_2+(\lambda+\sqrt{-1})\bar Z_2 \,\, .
$$
The HCF tensor of a left-invariant Hermitian metric on $\big(\widetilde{\rm SL}(2,\R)\times\R,J_{\lambda}\big)$ is given by
\begin{equation*}\left.\begin{aligned}
K_{1\bar 1} &= \frac {(1+\lambda^2)y^2|z|^2-2(xy-|z|^2)^2+|z|^2(x^2-2|z|^2)-2(1+\lambda^2)y^2(xy-|z|^2)}{(xy-|z|^2)^2}\\
K_{2\bar 2} &= \frac {\lambda^2y^4+(y^2-|z|^2)^2}{(xy-|z|^2)^2}  \\
K_{1\bar 2} &= \frac {yz(\lambda^2y^2+(x-y)^2)}{(xy-|z|^2)^2}
\end{aligned}\right. . \end{equation*}
Compact quotients of $\big(\widetilde{\rm SL}(2,\R)\times\R,J_{\lambda}\big)$ are non-K\"ahler properly elliptic surfaces.

\subsubsection*{Primary Kodaira surfaces} \hfill \par

The Lie group is $G= \R \times {\rm H}_3(\R)$, where ${\rm H}_3(\R)$ is the three-dimensional real Heisenberg group. It admits a unique left-invariant complex structure $J$ and the structure constants $\mu$ of its complexified Lie algebra are
$$
\mu(Z_1,\bar Z_1)=\sqrt{-1}(Z_2+\bar Z_2) \,\, .
$$
The HCF tensor of a left-invariant Hermitian metric on $\big(\R \times {\rm H}_3(\R),J\big)$ is 
$$
K_{1\bar 1} = \frac {-2y^2(xy-|z|^2)+y^2|z|^2}{(xy-|z|^2)^2} \,\, , \quad K_{2\bar 2} = \frac {y^4}{(xy-|z|^2)^2} \,\, , \quad K_{1\bar 2} = \frac {y^3z}{(xy-|z|^2)^2} \,\, .
$$
Compact quotients of $\big(\R \times {\rm H}_3(\R),J\big)$ are primary Kodaira surfaces, which are non-K\"ahler.

\subsubsection*{Secondary Kodaira surfaces} \hfill \par

The Lie group is $G= \R \ltimes {\rm H}_3(\R)$. It admits two different left-invariant complex structure $J_{\pm}$ and the structure constants $\mu=\mu_{\pm}$ of its complexified Lie algebra are
$$
\mu(Z_1, Z_2)=\varepsilon Z_1\,\,,\quad \mu(Z_1,\bar Z_2)=-\varepsilon Z_1\,\,,\quad \mu(Z_1,\bar Z_1)=-\varepsilon\sqrt{-1}(Z_2+\bar Z_2) \,\, , \quad \text{ with } \varepsilon = \pm 1 \,\, .
$$
The HCF tensor of a left-invariant Hermitian metric on $\big(\R \ltimes {\rm H}_3(\R),J_{\pm}\big)$ is given by
$$
K_{1\bar 1} = \frac {|z|^2(x^2+y^2)-2y^2(xy-|z|^2)}{(xy-|z|^2)^2} \,\, , \quad K_{2\bar 2} = \frac {y^4+|z|^4}{(xy-|z|^2)^2} \,\, , \quad K_{1\bar 2} = \frac {yz(x^2+y^2)}{(xy-|z|^2)^2} \,\, .
$$
Compact quotients of $\big(\R \ltimes {\rm H}_3(\R),J_{\pm}\big)$ are secondary Kodaira surfaces, which are non-K\"ahler.

\subsubsection*{Inoue surfaces of type $S^0$} \hfill \par

The group $G={\rm Sol}_0^4$ is a solvable 4-dimensional real Lie group which admits a two-parameter family $J_{a,b}$ of left-invariant complex structures, where $a,b \in \R$, and with respect to $J_{a,b}$ the structure constants $\mu=\mu_{a,b}$ of its complexified Lie algebra are
\begin{equation*}
\mu(Z_1, Z_2)=-(b+\sqrt{-1}a)Z_1 \,\, , \quad \mu(Z_1,\bar Z_2)=(b+\sqrt{-1}a)Z_1 \,\, , \quad \mu(Z_2,\bar Z_2)=-2\sqrt{-1}a(Z_2+\bar Z_2) \,\, .
\label{bracketS0} \end{equation*}
The HCF tensor of a left-invariant Hermitian metric on $\big({\rm Sol}_0^4,J_{a,b}\big)$ is given by
$$
\left.\begin{aligned}
K_{1\bar 1} &= \frac {x^2|z|^2(b^2+9a^2)}{(xy-|z|^2)^2} \\
K_{2\bar 2} &= \frac {|z|^4(a^2+b^2)+16|z|^2a^2xy-8a^2x^2y^2}{(xy-|z|^2)^2}\\
K_{1\bar 2} &= \frac {x^2yz(b^2+9a^2)}{(xy-|z|^2)^2} 
\end{aligned} \right. .
$$
Notice that $\big({\rm Sol}_0^4,J_{a,b}\big)$ does not always admit a co-compact lattice. When such a lattice does exist, the quotient is an Inoue surface of type $S^0$, which is non-K\"ahler.

\subsubsection*{Inoue surfaces of type $S^{\pm}$} \hfill \par

The group $G={\rm Sol}_1^4$ is a solvable 4-dimensional real Lie group which admits two different left-invariant complex structure $J_{1,2}$. The structure constants $\mu=\mu_{1}$ of the complexified Lie algebra of $\big({\rm Sol}_1^4,J_{1}\big)$ are 
$$
\mu(Z_1, Z_2)=- Z_2 \,\, , \quad \mu(Z_1,\bar Z_2)=- Z_2 \,\, , \quad \mu(Z_1,\bar Z_1)=-Z_1+\bar Z_1\,\,,
$$ 
and the HCF tensor of a left-invariant Hermitian metric on $\big({\rm Sol}_1^4,J_{1}\big)$ is given by
\begin{equation*}
K_{1\bar 1} = -3-\frac {|z|^2(z-\bar z)^2}{(xy-|z|^2)^2}\,\,,\quad K_{2\bar 2} = -\frac {y^2(z-\bar z)^2}{(xy-|z|^2)^2}\,\,,\quad K_{1\bar 2} = \frac {y(z( \bar z^2 -z^2)-2xy(\bar z- z))}{(xy-|z|^2)^2}\,.
\end{equation*} 

\noindent On the other hand, the structure constants $\mu=\mu_{2}$ of the complexified Lie algebra of $\big({\rm Sol}_1^4,J_{2}\big)$ are 
$$
\mu(Z_1, Z_2)=- Z_2\,\,,\quad \mu(Z_1,\bar Z_2)=- Z_2\,\,,\quad \mu(Z_1,\bar Z_1)=-Z_1+\bar Z_1+Z_2-\bar Z_2\,\,,
$$
and the HCF tensor of a left-invariant Hermitian metric on $\big({\rm Sol}_1^4,J_{2}\big)$ is given by
$$
\left.\begin{aligned}
K_{1\bar 1} &= -3-\frac {|z|^2(z-\bar z)^2+2y^2(xy-|z|^2)-y^2|z|^2}{(xy-|z|^2)^2} \\
K_{2\bar 2} &= -\frac {y^2((z-\bar z)^2-y^2)}{(xy-|z|^2)^2} \\
K_{1\bar 2} &= \frac {y(z( \bar z^2 -z^2)-2xy(\bar z- z)+y^2z)}{(xy-|z|^2)^2}
\end{aligned}\right.. 
$$

\noindent Compact quotients of $\big({\rm Sol}_1^4,J_{1}\big)$ are Inoue surfaces of type $S^{\pm}$, while compact quotient of $\big({\rm Sol}_1^4,J_{2}\big)$ are Inoue surfaces of type $S^{+}$. In both cases, these surfaces are non-K\"ahler.

\section{HCF on locally homogeneous surfaces} \label{mainsec}

In this section we study the behavior of locally homogeneous solutions to the HCF on the family of compact complex surfaces we listed in Section \ref{model_sub}. Furthermore, whenever a solution to the HCF is immortal, we determine the Gromov-Hausdorff limit of its normalization $(1{+}t)^{-1}g(t)$ as $t \to +\infty$. \smallskip

Let $X$ be a compact complex surface covered by a connected, simply-connected 4-dimensional real Lie group $G$ and $\Gamma\subset G$ a co-compact lattice such that $X=\Gamma\backslash G$. By construction, all left-invariant tensor fields on $G$ factorizes through $X$. This yields a one-to-one correspondence between locally homogeneous solutions to the HCF on $X$ and solutions to the corresponding ODE on $G$
\begin{equation*}
\tfrac{d}{dt}g(t)=-K(g(t))\,\,,\qquad g(0)=g_0\,\,,
\end{equation*}
where $g_0$ denotes the pull-back of the starting metric on $G$. Nonetheless, since the standard left-action of $G$ on itself does not always factorize through $X=\Gamma\backslash G$, the quotient $\Gamma\backslash G$ is not globally $G$-homogeneous in general.

\medskip
\noindent {\bf Notation.} Any left-invariant Hermitian metric $g$ on $(G,J)$ will be considered in the form of \eqref{leftinvmetric}. For the sake of shortness, we set $D:=xy-|z|^2$ and $u:=|z|^2$.

\subsection{Hyperelliptic surfaces} \hfill \par

The HCF on $\big(\widetilde{\rm SE}(2)\times\R,J\big)$ reduces to the following ODEs system:

\begin{equation*}
\dot x =- \frac {x^2u}{D^2}\,\,,\qquad \dot y=- \frac {u^2}{D^2}\,\,,\qquad \dot u=-2 \frac {x^2yu}{D^2}\,.
\end{equation*}

\begin{prop}\label{prop_hyper} Let $g_0$ be a locally homogeneous Hermitian metric on a hyperelliptic surface $X$. Then, the solution $g(t)$ to the HCF starting from $g_0$ exists for all $t\geq0$. Moreover
$$
\left(X,(1{+}t)^{-1}g(t)\right) \GHa \text{\rm \{point\}}\quad \text{as}\,\,\, t\to \infty\,\,.
$$
\end{prop}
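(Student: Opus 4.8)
My plan is to extract two-sided a priori bounds on the coefficients $x,y,u$ directly from the system and then read off both immortality and collapse from them. First I would note that, since $u=|z|^2\ge 0$ and $\dot u=-2x^2yu/D^2$ is linear and homogeneous in $u$, the function $u$ remains non-negative and non-increasing; the signs of the right-hand sides then show that $x$ and $y$ are non-increasing as well, so $x\le x_0$, $y\le y_0$ and $0\le u\le u_0$ on the interval of existence. The step I expect to be decisive is the evolution of the determinant $D=xy-u$, for which a direct computation gives
\[
\dot D=\dot x\,y+x\,\dot y-\dot u=\frac{xu}{D^2}\,(xy-u)=\frac{xu}{D}\ge 0\,.
\]
Thus $D$ is non-decreasing, whence $D\ge D_0>0$ throughout; combining this with $x\le x_0$ and $y\le y_0$ and using $xy\ge D$ yields the lower bounds $x\ge D_0/y_0$ and $y\ge D_0/x_0$.

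With these estimates in hand, I would argue that $(x,y,u)$ stays in a compact subset of the open region $\{x,y>0,\ 0\le u<xy\}$ on which the right-hand side of the system is smooth. Hence no coefficient reaches the degenerate boundary $D=0$ nor escapes to infinity in finite time, and the standard continuation criterion for ODEs produces a solution $g(t)$ defined for all $t\ge 0$. This settles the first assertion.

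For the Gromov–Hausdorff statement I would fix the background left-invariant metric $\hat g$ with $x=y=1$, $z=0$. The uniform upper bounds on the coefficients furnish a constant $C>0$, independent of $t$, with $g(t)\le C\,\hat g$ as Hermitian (hence Riemannian) metrics; the identity map $(X,C\hat g)\to (X,g(t))$ is then distance non-increasing, so $\operatorname{diam}\big(X,g(t)\big)\le \sqrt{C}\,\operatorname{diam}(X,\hat g)=:L$ for all $t$. Rescaling gives $\operatorname{diam}\big(X,(1{+}t)^{-1}g(t)\big)\le L\,(1{+}t)^{-1/2}\to 0$ as $t\to\infty$. Building the evident GH $\epsilon$-approximation that collapses $X$ to a single point, or invoking the bound ${\rm dist}_{{}_{\rm GH}}(\,\cdot\,,\{{\rm point}\})\le \tfrac32\,\epsilon$ recalled in Section \ref{secGH}, I conclude $\big(X,(1{+}t)^{-1}g(t)\big)\GHa \{\text{point}\}$.

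The main obstacle is simply locating the monotone quantity that does all the work: the determinant $D$ plays a double role, since its monotonicity supplies the non-degeneracy lower bound needed for long-time existence, while together with the elementary upper bounds on $x,y,u$ it pins $g(t)$ below a fixed metric, which is exactly the uniform diameter control forcing collapse under the $(1{+}t)^{-1}$ normalization. Once the two-sided coefficient bounds are established the remaining analysis is soft. As an optional refinement I would observe that $\int_0^\infty u\,dt<\infty$ (from $\int_0^\infty \dot D\,dt=D_\infty-D_0<\infty$ and the lower bound on $x/D$), so that $u\to 0$ and $g(t)$ converges to a flat K\"ahler limit; this sharper picture, however, is not needed for the statement.
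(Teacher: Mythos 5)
Your proposal is correct and takes essentially the same route as the paper's proof: the decisive monotonicity $\dot D = xu/D \ge 0$, combined with the monotone decrease of $x$, $y$, $u$, confines the solution to a compact region where the vector field is smooth (giving immortality), and the resulting uniform coefficient bounds force $(1{+}t)^{-1}g(t)$ to collapse to a point. Your explicit continuation argument and diameter estimate merely spell out steps the paper leaves implicit.
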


\begin{proof} 
A direct computation yields that
$$
\dot{D}=\frac{xu}{D}\geq0 \,\, ,
$$ i.e. the determinant of $g(t)$ is always increasing. On the other hand, since all $x,y,u$ decrease, the first claim follows. The last claim follows directly from the fact that 
$$
(1{+}t)^{-1}x(t) \, , \,\, (1{+}t)^{-1}y(t) \, , \,\, (1{+}t)^{-1}u(t) \rightarrow 0
$$ 
as $t \rightarrow +\infty$.
\end{proof}

It is easy to show that a left-invariant metric $g$ on $\big(\widetilde{\rm SE}(2)\times\R,J\big)$ is K\"ahler if and only if $z=0$. Indeed, by a direct computation, one gets
$$
d\omega=-z\, \zeta^{1}\wedge\zeta^2\wedge\bar\zeta^{ 2}+\bar z\, \zeta^{2}\wedge\bar \zeta^{1}\wedge\bar \zeta^{  2}\,.
$$
Moreover, in that case it is also flat and so we get

\begin{cor} Any locally homogeneous solution $g(t)$ to the HCF on a hyperelliptic surface $X$ converges exponentially fast to a flat K\"ahler metric $g_{\infty}$.
\end{cor}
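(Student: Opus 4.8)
The plan is to work directly with the reduced ODE system and track the three real quantities $x$, $y$, $u=|z|^2$ together with the determinant $D=xy-u$. By Proposition \ref{prop_hyper} (and its proof) we already know that $x,y,u$ are nonincreasing while $D$ is nondecreasing, so all four quantities admit limits $x_\infty,y_\infty,u_\infty\geq0$ and $D_\infty\leq x(0)y(0)$. The goal is to upgrade these limits to $x_\infty,y_\infty>0$ and $u_\infty=0$, and then to establish an exponential rate. Once this is done, the limiting metric $g_\infty=x_\infty\,\zeta^1\odot\bar\zeta^1+y_\infty\,\zeta^2\odot\bar\zeta^2$ has vanishing $\zeta^1\odot\bar\zeta^2$-component, hence is K\"ahler by the computation of $d\omega$ preceding the statement, and therefore flat; since left-invariant tensors factor through $X=\Gamma\backslash G$, convergence of the coefficients descends to convergence of $g(t)$ on $X$.

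First I would rule out degeneration of the limit. From $u\geq0$ one has $D\leq xy$, and combining $D(t)\geq D(0)>0$ with $y(t)\leq y(0)$ yields the uniform lower bound $x(t)\geq D(0)/y(0)>0$; in particular $x_\infty>0$ and $x$ stays in a compact subinterval of $(0,\infty)$. Next I would show $u_\infty=0$: integrating $\dot D=xu/D$ over $[0,\infty)$ and using $x\geq D(0)/y(0)$ together with $D\leq x(0)y(0)$ gives $\int_0^\infty u\,ds<\infty$, so the nonincreasing function $u$ must tend to $0$. Consequently $D_\infty=x_\infty y_\infty$, and since $D_\infty\geq D(0)>0$ with $x_\infty<\infty$, we obtain $y_\infty=D_\infty/x_\infty>0$. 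This pins down all three limits and shows $z(t)\to0$.

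For the exponential rate I would exploit that the coefficient $x^2y/D^2$ converges to the positive constant $x_\infty^2 y_\infty/D_\infty^2$. Writing the evolution of $z$ as $\dot z=-(x^2y/D^2)\,z$, a linear scalar equation with real, eventually uniformly positive coefficient, gives $|z(t)|\leq Ce^{-ct}$ for suitable $C,c>0$; note that the argument of $z$ is constant along the flow, so no oscillation can occur. Feeding this bound into $\dot x=-x^2u/D^2$ and $\dot y=-u^2/D^2$, with $u=|z|^2$, bounds $|\dot x|$ and $|\dot y|$ by exponentially decaying functions, so integrating from $t$ to $\infty$ gives $|x(t)-x_\infty|+|y(t)-y_\infty|\leq C'e^{-ct}$. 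Altogether $g(t)\to g_\infty$ exponentially fast.

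The main obstacle is the second step, namely showing that the metric does not degenerate in the limit. It is a priori conceivable that $x$ or $y$ collapses to $0$, so that $g(t)$ leaves the space of genuine metrics, and the whole argument hinges on the interplay between the monotonicity of $D$ and the elementary inequality $D\leq xy$, which simultaneously bounds $x$ away from $0$ and forces $u\to0$. The remaining steps are then routine Gr\"onwall-type estimates.
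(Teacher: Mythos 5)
Your proof is correct, and it reaches the conclusion by a slightly more roundabout route than the paper. The paper's entire argument hinges on a single pointwise inequality: since $D\leq xy$ and $y\leq y_0$, the coefficient in $\dot u=-2x^2yu/D^2$ satisfies $x^2y/D^2\geq 1/y\geq 1/y_0$, whence $\dot u\leq -2u/y_0$ and $u(t)\leq u_0e^{-2t/y_0}$ immediately -- an explicit rate valid from $t=0$, with no need to first identify the limits of $x,y,u$. The non-degeneracy of the limit then comes for free: $D$ is increasing and bounded above by $x_0y_0$, so $D_\infty\in(D_0,x_0y_0]$ is finite and positive, and since $u\to0$ while $x,y$ decrease, $x_\infty y_\infty=D_\infty>0$ forces $x_\infty,y_\infty>0$. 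You instead pin down the limits first -- bounding $x$ below by $D_0/y_0$, integrating $\dot D=xu/D$ to get $\int_0^\infty u\,dt<\infty$ and hence $u_\infty=0$ -- and only then extract exponential decay from the linear equation $\dot z=-(x^2y/D^2)z$ with eventually positive coefficient. Both routes exploit exactly the same two facts, namely $D\leq xy$ and the monotonicities from Proposition \ref{prop_hyper}; your integral argument via $\dot D$ is a clean alternative, and it has the small virtue of isolating why the metric cannot degenerate before any rate is needed, whereas the paper's one-line comparison buys brevity and a global explicit rate $2/y_0$. Your closing steps -- feeding the decay of $u$ into $\dot x$ and $\dot y$ to get exponential convergence of the remaining coefficients, and identifying the limit as flat K\"ahler because $z_\infty=0$ -- make explicit what the paper's proof leaves implicit, and are welcome additions given that the statement asserts exponential convergence of the full metric $g(t)$, not just of $u$.
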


\begin{proof} We recall that $g(t)$ is immortal and $\dot{D}(t)>0$, $x(t) < x_0$, $y(t) < y_0$, $u(t) < u_0$ for any $t\geq0$. Notice that 
$$
\dot u\leq -2 \frac{u}{y_0} \,\, ,
$$
which implies $u(t)\leq u_0e^{-\frac2{y_0}t}$ for all $t\geq0$. Finally, since 
$$
\lim_{t \to +\infty}D(t) = D_{\infty} \in (D_0, +\infty)\,\,,
$$ 
it comes that $x(t) \to x_{\infty} \in (0,x_0)$ and $y(t) \to y_{\infty} \in (0,y_0)$ as $t \to +\infty$.
\end{proof}

\subsection{Hopf surfaces} \hfill \par

The HCF on $\big({\rm SU(2)}\times\R,J_{\lambda}\big)$ reduces to the ODEs system

\begin{equation} \label{Hopf}
\left.\begin{aligned}
\dot{x}&=-\frac{cx^4+u(2x^2+u)}{D^2} \\
\dot{y}&=-2+\frac{2cx^2D-cx^2u-u(y^2+2u)}{D^2} \\
\dot{u}&=-2\frac{xu(cx^2+2xy+y^2)}{D^2}
\end{aligned}\right.,
\end{equation} 
with $c:=1+\lambda^2$.

\begin{prop}\label{prop_hopf} Let $g_0$ be a locally homogeneous Hermitian metric on a Hopf surface $X$. Then, the solution $g(t)$ to the HCF starting from $g_0$ develops a finite extinction time $T<\infty$ and $(X,g(t))$ collapses as $t \rightarrow T^-$.
\end{prop}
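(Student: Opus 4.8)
\subsection*{Proof proposal}

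The plan is to reduce the whole statement to a single coercive estimate on $x$, after ruling out any degeneration of the \emph{shape} of the metric. I work throughout with the system \eqref{Hopf}, writing $c=1+\lambda^2>0$ and $D=xy-u$, and I introduce the two scale-invariant unknowns
\begin{equation*}
s:=\frac{y}{x}\,\,,\qquad v:=\frac{u}{x^2}\,\,,
\end{equation*}
so that $D=x^2(s-v)$ and positivity of $g(t)$ reads $s>v\geq0$. The first and decisive step is to compute the evolution of the shape $s$. Starting from $\dot s=x^{-2}(\dot y\,x-\dot x\,y)$ and substituting \eqref{Hopf}, after the cancellations $3cx^4y-3cx^3u=3cx^3D$ and $2x^2y-xy^2-2ux+uy=(2x-y)D$ one finds $\dot y\,x-\dot x\,y=-2x+D^{-1}\big(3cx^3+u(2x-y)\big)$, and dividing by $x^2$ this becomes
\begin{equation*}
\dot s=\frac{3c-2s+v(4-s)}{x\,(s-v)}=\frac{(2+v)\big(s^*(v)-s\big)}{x\,(s-v)}\,\,,\qquad s^*(v):=\frac{3c+4v}{2+v}\,\,.
\end{equation*}
Since $x>0$ and $s-v>0$ along the flow, the sign of $\dot s$ is exactly the sign of $s^*(v)-s$.

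The second step turns this into an a priori bound. As $v$ runs over $[0,\infty)$ the function $s^*(v)$ is monotone and interpolates between $s^*(0)=\tfrac{3c}{2}$ and $s^*(\infty)=4$, so it always lies between $\sigma_-:=\min\{\tfrac{3c}{2},4\}$ and $\sigma_+:=\max\{\tfrac{3c}{2},4\}$. Consequently $\dot s\leq0$ whenever $s\geq\sigma_+$ and $\dot s\geq0$ whenever $s\leq\sigma_-$, which traps $s(t)$ in the compact interval $[S_-,S_+]$ with $S_-:=\min\{s(0),\sigma_-\}>0$ and $S_+:=\max\{s(0),\sigma_+\}<\infty$, for as long as the solution exists. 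This is the core of the proof: the restoring sign of $\dot s$ forbids both degenerations $y/x\to0$ and $y/x\to\infty$.

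With the shape confined, the conclusion is immediate. From $D=x^2(s-v)\leq S_+x^2$ and $u\geq0$ I get
\begin{equation*}
\dot x=-\frac{cx^4+2ux^2+u^2}{D^2}\leq-\frac{cx^4}{D^2}\leq-\frac{c}{S_+^{2}}=:-\delta<0\,\,,
\end{equation*}
so $x(t)\leq x(0)-\delta t$ and $x$ vanishes at some finite $T\leq x(0)/\delta$. Since $x>0$ is necessary for $g(t)$ to be positive definite, $T$ is the finite extinction time. As $t\to T^-$ one has $x\to0$, while $s\leq S_+$ forces $y=sx\to0$ and $u\leq xy=sx^2\to0$; hence every coefficient of $g(t)$ tends to $0$, the operator norm of $g(t)$ relative to a fixed background metric goes to $0$, and therefore $\mathrm{diam}(X,g(t))\to0$. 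Thus $(X,g(t))$ collapses as $t\to T^-$.

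The one delicate point I anticipate is precisely the shape computation: the expansion of $\dot y\,x-\dot x\,y$ must collapse to the stated quotient, which hinges on the two factorisations displayed above. Once that identity is established, the trapping of $s$ and the coercive bound $\dot x\leq-\delta$ are routine, and no further global analysis of the coupled system is needed.
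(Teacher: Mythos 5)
Your computation of the shape evolution is correct: I checked that along \eqref{Hopf} one indeed has $\dot y\,x-\dot x\,y=-2x+D^{-1}\big(3cx^3+u(2x-y)\big)$, hence $\dot s=(2+v)\big(s^*(v)-s\big)/\big(x(s-v)\big)$ with $s^*(v)=(3c+4v)/(2+v)$; since $(s^*)'$ has the sign of $8-3c$, $s^*$ ranges monotonically between $3c/2$ and $4$, and the trapping $s(t)\in[S_-,S_+]$ is sound. The resulting coercive bound $\dot x\le -c/S_+^2$ is a genuinely different route from the paper's: there, finiteness of $T$ is proved by contradiction (if $T=\infty$, then $\dot x,\dot u\to0$ force $\limsup\dot y\le-2$, absurd), which gives no rate, whereas your argument produces the explicit bound $T\le S_+^2x_0/c$. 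This half of your proof is complete and correct, and arguably sharper than the paper's.

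The collapse step, however, has a genuine gap. From $\dot x\le-\delta$ you may conclude only that the maximal existence time satisfies $T\le x_0/\delta$; you may \emph{not} conclude that $x\to0$ as $t\to T^-$. The solution could a priori die earlier because the metric degenerates, i.e.\ $D\to0$, while $x$ (which is monotone, so has a limit $x_T$) stays bounded away from zero; your sentence ``$x$ vanishes at some finite $T$'' silently excludes this scenario, and everything after it ($y=sx\to0$, $u\le S_+x^2\to0$, diameter $\to0$) rests on it. Note that the paper's own collapse conclusion is exactly $\lim_{t\to T^-}D=0$, obtained by excluding $D\to\infty$ and $D\to D_\infty\in(0,\infty)$; you are asserting something strictly stronger, so you owe an argument that $x_T=0$. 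The gap is fixable inside your framework: substituting $u=xy-D$ into the identity for $\dot D$ in \eqref{estDuy***} gives the exact equation
\begin{equation*}
\dot D=\frac{x(cx^2+2xy+y^2)}{D}-(4x+y)\,,
\end{equation*}
and if $x_T>0$, the trapping bounds give $x(cx^2+2xy+y^2)\ge cx_T^3=:A>0$ and $4x+y\le(4+S_+)x_0=:B$ on $[0,T)$, so $\dot D\ge B>0$ whenever $D\le A/(2B)$, whence $D\ge\min\{D_0,A/(2B)\}>0$; together with $y=sx\ge S_-x_T>0$ and the upper bounds on $x,y,u$, the solution then stays in a compact subset of the domain and extends past $T$, contradicting maximality. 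Hence $x_T=0$, and your chain $y=sx\to0$, $u\le S_+x^2\to0$ legitimately yields collapse (indeed in the strong, diameter-to-zero sense). Without this or an equivalent step, the second half of the proposition is unproved.
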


\begin{proof}
Let $T\in(0,+\infty]$ be the maximal existence time of the flow. Then for any $t \in [0,T)$ we have
\begin{equation} \label{estDuy***}
\begin{aligned}
\dot{D}=&\frac{c\,x^3-2x^2y+(4x+y)u}{D}\,\,, \\ 
\dot{x}<0 \,\,, \quad \dot{u}&<0 \quad\Longrightarrow\quad x(t)\leq x_0 \,\,, \quad u(t)\leq u_0 \,. 
\end{aligned}\end{equation}
Let us suppose by contradiction that $T=+\infty$. Then it necessarily holds
\begin{equation} \label{absestxudot}
\begin{aligned} 
\lim_{t\rightarrow+\infty}\dot{x}(t)=0 \quad&\Longrightarrow\quad \lim_{t\rightarrow+\infty}(c-1)\Big(\frac{x^2}{D}\Big)^2=\lim_{t\rightarrow+\infty}\frac{x^2+u}{D}=0 \,\, \\ 
\lim_{t\rightarrow+\infty}\dot{u}(t)=0 \quad&\Longrightarrow\quad \lim_{t\rightarrow+\infty}\frac{u}{x}(c-1)\Big(\frac{x^2}{D}\Big)^2=\lim_{t\rightarrow+\infty}xu\Big(\frac{x+y}{D}\Big)^2=0 \,\, . 
\end{aligned} \end{equation}
On the other hand
$$
\dot{y}+2 = \frac{2cx^2D-cx^2u-u(y^2+2u)}{D^2} \leq 2c\frac{x^2}{D} \leq 2c\frac{x^2+u}{D} \,\, ,
$$ 
and so by means of \eqref{absestxudot}
$$
\lim_{t\rightarrow +\infty}\dot{y}(t) \leq -2
$$ 
which is absurd. Thus $g(t)$ develops a finite time singularity $T<\infty$. In order to prove the last claim, let us suppose by contradiction that $D\to\infty$ as $t\to T^-$. Then
$$
\lim_{t\to T^-} \dot x(t)=0 \qquad\text{and}\qquad \lim_{t\to T^-} \dot y(t)<-2\,\,,
$$
this in turn imply $\lim_{t\to T^-} D\neq \infty$, which is not possible. On the other hand, since the solution cannot be extended over $t=T$, the limit $\lim_{t\to T^-} D$ cannot be positive and finite. Therefore, $\lim_{t\to T^-}D=0$ and the thesis follows.
\end{proof}

Next, we exhibit an explicit solution to the HCF starting from a diagonal metric on $\big({\rm SU(2)}\times\R,J_{\lambda}\big)$. 

\begin{example}
Let $g_0$ be a left-invariant diagonal Hermitian metric on $\big({\rm SU(2)}\times\R,J_{\lambda}\big)$. Then, the ODEs system \eqref{Hopf} reduces to
\begin{equation}\label{diagHopf} \begin{aligned}
\dot{x}=-c\frac{x^2}{y^2}\,,\qquad \dot{y}=-2\frac{y-cx}{y}\,\,. 
\end{aligned}\end{equation}
It is worth noting that
\begin{equation} \label{derII}
\ddot{x}=-4c\frac{x^2}{y^2}\Big(y-\frac32cx\Big) \,\, , \qquad \ddot{y}=+4c\frac{x}{y^3}\Big(y-\frac32cx\Big) \,\,. 
\end{equation} 
Now suppose that $y_0=\frac32cx_0$ and that the solution to \eqref{diagHopf} starting from $g_0$ satisfies
$$
y(t)=\frac32c\,x(t) \quad \text{ for all } t \in [0,T) \,\, .
$$
Then by \eqref{derII} we would get 
$$
\ddot{x}(t)=\ddot{y}(t)=0\,\,,
$$  
which in turn implies
\begin{equation}\label{sol_hopf}
x(t)=x_0+kt \,\, , \quad  y(t)=\tfrac32cx_0+\tfrac32ckt
\end{equation}
for some $k \in \R$. A direct computation yields that \eqref{sol_hopf} solves \eqref{diagHopf} if and only if $k=-\frac4{9c}$. Notice that the maximal existence time for this explicit solution is 
$
T=\frac 94cx_0\,.
$ 
\end{example}

\subsection{Non-K\"ahler properly elliptic surfaces} \hfill \par

The HCF on $\big(\widetilde{\rm SL}(2,\R)\times\R,J_{\lambda}\big)$ reduces to the ODEs system

\begin{equation}\label{propell}
\left.\begin{aligned}
\dot{x}=&2+\frac{2cy^2D-cy^2u-ux^2+2u^2}{D^2} \\
\dot{y}=&-\frac{cy^4-2y^2u+u^2}{D^2} \quad \quad \quad \quad \quad \quad \\
\dot{u}=&-2\frac{yu(x^2-2xy+cy^2)}{D^2}
\end{aligned} \right.,
\end{equation} 
with $c:=1+\lambda^2$.

\begin{prop}\label{prop_ell} Let $g_0$ be a locally homogeneous Hermitian metric on a non-K\"ahler properly elliptic surface $X$. Then, the solution $g(t)$ to the HCF starting from $g_0$ exists for all $t\geq0$. In particular, $x(t)\sim2t$ and $y(t)<y_0$, $u(t)<u_0$ for any $t>0$.
\end{prop}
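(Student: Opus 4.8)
The plan is to analyse the autonomous system \eqref{propell} in the variables $x,y,u$ (with $D=xy-u$ and $c=1+\lambda^2\geq1$) and to extract four facts in turn: monotonicity of $y$ and $u$, strict positivity and monotonicity of the determinant $D$, long-time existence, and finally the linear growth $x\sim2t$.

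First I would settle the easy monotonicity. Completing the square in the numerators of $\dot y$ and of $\dot u$ gives
\begin{equation*}
cy^4-2y^2u+u^2=(c-1)y^4+(y^2-u)^2\geq0 \,\, , \qquad x^2-2xy+cy^2=(x-y)^2+(c-1)y^2\geq0 \,\, ,
\end{equation*}
so that $\dot y\leq0$, and since $yu\geq0$ also $\dot u\leq0$. Hence $y\leq y_0$ and $u\leq u_0$, both non-increasing and therefore convergent; an analyticity argument (ruling out that $y$ or $u$ be locally constant, which would force $D=0$) upgrades this to the strict inequalities $y(t)<y_0$ and $u(t)<u_0$ for $t>0$.

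The linchpin is the behavior of $D$. Computing $\dot D=\dot xy+x\dot y-\dot u$ and simplifying, everything collapses to
\begin{equation*}
\dot D=2y+\frac{cy^3+u(x-2y)}{D}=\frac{2xy^2+cy^3+u(x-4y)}{D} \,\, .
\end{equation*}
Using only $0\leq u<xy$ (i.e. $D>0$) I would check that the numerator is positive: when $x\geq4y$ it is at least $2xy^2+cy^3$, and when $x<4y$ the estimate $u(x-4y)>xy(x-4y)$ turns it into $y\big((x-y)^2+(c-1)y^2\big)>0$. Thus $\dot D>0$, so $D(t)\geq D_0>0$ for all $t$: the metric never degenerates, and this non-collapsing is exactly what separates this case from the Hopf case. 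Immortality then follows by continuation: dropping the two negative terms yields $\dot x\leq2+2cy_0^2/D_0+2u_0^2/D_0^2=:C$, so $x$ is bounded on any finite interval, and together with $D\geq D_0$, $y\leq y_0$, $u\leq u_0$ this confines $(x,y,u)$ to a compact subset of the positivity cone $\{x,y>0,\,xy-u>0\}$, on which the vector field is smooth; hence the solution cannot leave in finite time and is defined for all $t\geq0$.

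The remaining and most delicate point is the asymptotics $x\sim2t$, for which I would first prove $x\to\infty$. Arguing by contradiction, if $x$ stayed bounded then $D$ would be bounded and increasing, hence $\dot D\to0$; inspecting the numerator $2xy^2+cy^3+u(x-4y)\to0$ forces $y\to0$ (the borderline case $c=1$, $x\to y_*$ must be excluded separately, since it would give $\dot D\to3y_*D_\infty\neq0$), and then $D=xy-u\to0$ contradicts $D\geq D_0$. Once $x\to\infty$, for $x>4y_0$ the estimate $\tfrac{y(x^2-2xy+cy^2)}{D^2}\geq\tfrac{1}{2y_0}$ gives $\dot u\leq-u/y_0$, so $u\to0$ exponentially while $x$ grows at most linearly. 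Feeding this back into $\dot x$ and bounding the fraction term by term---using $D\geq D_0$ for $ux^2/D^2$ (exponential decay beats polynomial growth) and $D\geq xy/2$ (valid once $u<xy/2$) for the remaining terms---shows the whole correction tends to $0$, so $\dot x\to2$ and therefore $x\sim2t$. The main obstacle is precisely this last coupling: controlling $ux^2/D^2$ requires knowing simultaneously that $D$ stays bounded below \emph{and} that $u$ decays fast enough, which is why establishing $\dot D>0$ and $x\to\infty$ beforehand is essential.
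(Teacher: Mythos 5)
Your proposal is correct, and while it shares the paper's overall skeleton (monotonicity of $y,u$; $\dot D>0$; a linear bound on $\dot x$ giving immortality; then $\dot x\to2$), it departs genuinely at the two hard steps, both times more cleanly. For $\dot D>0$, you verify directly that the numerator $2xy^2+cy^3+u(x-4y)$ of $\dot D$ (your formula agrees with the paper's $\dot{D}=(cy^3+2y(D-u)+xu)/D$ after expansion) is positive, splitting on $x\gtrless4y$ and using only $0\leq u<xy$; the paper instead runs a contradiction argument from $\dot D(t_*)\leq0$ through the chain \eqref{absest}--\eqref{Dxyabs2}, which is longer and less transparent. For the asymptotics, the paper first proves $u_\infty=0$ via the limit identities \eqref{ulim>0}, then $x\to\infty$, and finally tames the troublesome term $ux^2/D^2$ by splitting on $y_\infty>0$ versus $y_\infty=0$ and integrating the equation for $u/y^2$. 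You reverse the order: your $x\to\infty$ contradiction works without knowing $u_\infty=0$ (the slack $D\geq D_0$ even makes the limiting numerator strictly positive in the borderline case $c=1$, $x_*=y_\infty$, so your separate exclusion of it is not really needed), and once $x>4y_0$ you extract $\dot u\leq -u/y_0$ from $x^2-2xy+cy^2\geq x^2/2$ and $D^2\leq x^2y^2$, so $u$ decays exponentially. This single estimate kills all four correction terms in $\dot x-2$ at once, makes both the paper's $u_\infty=0$ argument and its $u/y^2$ trick unnecessary, and yields a strictly stronger quantitative conclusion.

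Three small points to tighten. First, in your borderline case the correct limit is $\dot D\to 3y_*$ (it is the \emph{numerator} that tends to $3y_*D_\infty$); the conclusion that it is nonzero is unaffected. Second, both the step ``$D$ bounded and increasing $\Rightarrow\dot D\to0$'' and the passage from refuting ``$x$ bounded'' to ``$x\to\infty$'' (which you need so that $x>4y_0$ holds for \emph{all} large $t$) deserve a line: the former by Barbalat's lemma or a subsequence argument, the latter by noting $x=(D+u)/y$ with $D,u,y$ monotone, so $x(t)$ has a limit in $[0,+\infty]$; the paper glosses exactly the same points in mirrored form, so this is parity, not a gap. Third, your analyticity remark overlooks the invariant locus $u\equiv0$: diagonal metrics stay diagonal, so a locally constant $u$ does not force $D=0$ there, and the strict claim $u(t)<u_0$ is then empty --- but the paper's own assertion $\dot u<0$ suffers the same degenerate exception, so again no gap relative to the paper.
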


\begin{proof} 

Let $T\in(0,+\infty]$ be the maximal existence time of the flow. Then, for any $t \in [0,T)$, we have
\begin{equation} \label{estDuy**}
\begin{aligned} \dot{D}=\frac{cy^3+2y(D-u)+xu}{D} \,\,,& \\ 
\dot{y}<0 \,\,, \quad \dot{u}<0 \quad\Longrightarrow\quad y(t)\leq y_0 \,\,,& \quad u(t)\leq u_0 \,\, . 
\end{aligned} \end{equation}
We prove now that $\dot{D}(t)>0$ for any $t \in [0,T)$. Let us suppose by contradiction that there exists $t_* \in [0,T)$ such that $\dot{D}(t_*)\leq0$. Then using \eqref{estDuy**} we get
\begin{equation} \label{absest} 
-x(t_*)u(t_*)\geq cy(t_*)^3-2y(t_*)(u(t_*)-D(t_*)) \,\,. 
\end{equation} 
On the other hand, since $D(t)=x(t)y(t)-u(t)$ and $\dot{u}(t_*)<0$, it necessarily holds 
\begin{equation} \label{dotxy<0} 
\dot{x}(t_*)y(t_*)+x(t_*)\dot{y}(t_*)\leq 0 \,\,. 
\end{equation} 
Moreover, by \eqref{absest} and a straightforward computation we get
\begin{equation}
D(t_*)^2\dot{x}(t_*)y(t_*) \geq 4D(t_*)^2y(t_*)+3cy(t_*)^3D(t_*)
\label{Dxyabs1} \end{equation}
and 
\begin{equation}
D(t_*)^2x(t_*)\dot{y}(t_*)\geq4y(t_*)u(t_*)D(t_*)-cy(t_*)^3D(t_*)\,\,.
\label{Dxyabs2} \end{equation}
Finally, \eqref{dotxy<0}, \eqref{Dxyabs1} and \eqref{Dxyabs2} imply
$$
4D(t_*)y(t_*)+2cy(t_*)^2+4y(t_*)u(t_*) \leq D(t_*)(\dot{x}(t_*)y(t_*)+x(t_*)\dot{y}(t_*)) \leq 0
$$ 
which is not possible. Hence the determinant $D$ satisfies
\begin{equation} 
\dot{D} > 0 \quad\Longrightarrow\quad D(t) \geq D_0 \quad \text{ for all } t \in [0,T) \,\, .\label{Dcresc}
\end{equation} 
On the other hand, it holds
\begin{equation} \label{estx**} 
\dot{x} \leq 2+\frac{2cy^2D+2u^2}{D^2} \leq 2\Big(1+c\frac{y_0^2}{D_0}+\frac{u_0^2}{D_0^2}\Big) \quad\Longrightarrow\quad x(t)\leq 2\Big(1+c\frac{y_0^2}{D_0}+\frac{u_0^2}{D_0^2}\Big)t+x_0
\end{equation} and hence \eqref{estDuy**}, \eqref{Dcresc} and \eqref{estx**} imply  $T=+\infty$.

We are now ready to prove the second part of the proposition, that is, $x(t)\sim 2t$ as $t\to\infty$. To do this, we use again a contradiction argument. Let us denote with
$$
u_{\infty}:=\lim_{t \rightarrow +\infty}u(t)\,\,,\qquad y_{\infty}:=\lim_{t \rightarrow +\infty}y(t)\,\,,
$$ and suppose by contradiction that $u_{\infty}>0$. Since
\begin{equation*} \label{estdotyu**}
\begin{aligned} 
\lim_{t\rightarrow+\infty}\dot{y}(t)=0 \quad&\Longrightarrow\quad \lim_{t\rightarrow+\infty}(c-1)\Big(\frac{y^2}{D}\Big)^2=\lim_{t\rightarrow+\infty}\frac{y^2-u}{D}=0 \,\,,\\ 
\lim_{t\rightarrow+\infty}\dot{u}(t)=0 \quad&\Longrightarrow\quad \lim_{t\rightarrow+\infty}\frac{u}{y}(c-1)\Big(\frac{y^2}{D}\Big)^2=\lim_{t\rightarrow+\infty}yu\Big(\frac{x-y}{D}\Big)^2=0 \,\,,
\end{aligned}\end{equation*} 
we have by means of \eqref{estDuy**} 
\begin{equation}\label{ulim>0} 
\lim_{t \rightarrow +\infty}\frac{y(x-y)}{D}=\lim_{t\rightarrow+\infty}\frac{y^2-u}{D}=0 \quad\Longrightarrow\quad \lim_{t \rightarrow +\infty}\frac{\frac{y}{x}-\frac{u}{xy}}{1-\frac{u}{xy}}= \lim_{t \rightarrow +\infty}\frac{1-\frac{y}{x}}{1-\frac{u}{xy}}=0 \,\,.
\end{equation} 
In view of \eqref{ulim>0}, we have two cases depending on whether $\lim_{t\to\infty}|1-\frac{u}{xy}|$ is bounded or not. If we suppose that $\lim_{t\to\infty}|1-\frac{u}{xy}|<\infty$, then
$$
\lim_{t \rightarrow +\infty}xy=u_{\infty}\quad \text{and}\quad \lim_{t \rightarrow +\infty}D=0 \,\, .
$$
On the other hand, if $\lim_{t\to\infty}|1-\frac{u}{xy}|=\infty$, then
$$
\lim_{t \rightarrow +\infty}xy=0\quad \text{and}\quad \lim_{t \rightarrow +\infty}D=-u_{\infty} \,\, .
$$
Since both cases lead to an absurd, it comes
\begin{equation}\label{u_inf}
u_{\infty}=0\,\,.
\end{equation}
Let us now suppose by contradiction that ${x(t) \rightarrow x_{\infty}<+\infty}$ as $t \rightarrow +\infty$. Then $D(t) \rightarrow D_{\infty}=x_{\infty}y_{\infty} \in (D_0,+\infty)$ as $t \rightarrow +\infty$ and therefore it must holds $x_{\infty}>0$. By means of \eqref{estDuy**}
$$
\lim_{t \rightarrow +\infty}\dot{D}(t)=0 \quad\Longrightarrow\quad cy_{\infty}^3+2y_{\infty}D_{\infty}=0 \quad\Longrightarrow\quad y_{\infty}=0 \quad\Longrightarrow\quad D_{\infty}=0
$$
which is not possible. Therefore $x(t)\to\infty$ as $t\to \infty$. On the other hand, we have
$$
\dot{x}=2+2c\frac{y^2}{D}-cu\left(\frac{y}{D}\right)^2-\frac{ux^2}{D^2}+2\frac{u^2}{D^2}
$$
and, since 
\begin{equation}\label{conv_var}
\frac{y^2}{D}  \rightarrow 0 \,\, , \quad u\left(\frac{y}{D}\right)^2 \rightarrow 0 \,\, , \quad u\left(\frac{x}{D}\right)^2 \rightarrow 0 \,\, , \quad \left(\frac{u}{D}\right)^2 \rightarrow 0 \,\, ,
\end{equation}
the claim follows. Indeed, if $y_{\infty}>0$, then $D_{\infty} = +\infty$ and hence \eqref{conv_var} follows. Now, let us assume that $y_{\infty}=0$. Since $D_{\infty}>D_0 >0$, we get 
$$
\frac{y^2}{D}  \rightarrow 0 \,\, , \quad u\left(\frac{y}{D}\right)^2 \rightarrow 0 \,\, , \quad \left(\frac{u}{D}\right)^2 \rightarrow 0 \,\, .
$$
Moreover, $u\left(\frac{x}{D}\right)^2 \sim \frac{u}{y^2}$ and 
$$
\frac{d}{dt}\Big(\frac{u}{y^2}\Big) = 2\Big(\frac{u}{y^2}\Big){\cdot} \frac1y\Big({-}1+2\frac{y^2-u}{D}\Big) \,\, .
$$ 
Hence, there exist $C>0$ and $t_*>0$ such that 
$$
\frac{d}{dt}\Big(\frac{u}{y^2}\Big) \leq -2C\Big(\frac{u}{y^2}\Big) \quad \text{ for any }t\geq t_*\,\, .
$$ 
This in turns implies 
$$
\Big(\frac{u(t)}{y(t)^2}\Big) \leq \Big(\frac{u(t_*)}{y(t_*)^2}\Big)e^{-2C(t-t_*)} \quad \text{ for any }t\geq t_*\,\, ,
$$ 
and hence $u\left(\frac{x}{D}\right)^2 \to 0$.
\end{proof}

In view of this result it comes the following

\begin{prop}\label{prop_ell_GH} Let $X$ be a non-K\"ahler properly elliptic surface and $g(t)$ be a locally homogeneous solution to the HCF on $X$. Then
$$
\left(X,(1{+}t)^{-1}g(t)\right) \GHa \left(C, g_{{}_{\rm KE}}\right) \quad \text{ as } t \to \infty\,\,,
$$ 
where $C$ is the base curve of $X$ and $g_{{}_{\rm KE}}$ is the K\"ahler-Einstein metric on $C$ with ${\rm Ric}(g_{{}_{\rm KE}})=-g_{{}_{\rm KE}}$.
\end{prop}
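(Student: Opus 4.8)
The plan is to exploit the holomorphic fibration $\pi\colon X\to C$ onto the base curve and to show that, along the normalized flow, the fibre directions collapse while the horizontal direction converges to the K\"ahler--Einstein metric on $C$; this is the same mechanism used by Tosatti and Weinkove in \cite{TW} for the Chern--Ricci flow. First I would set up the fibration. Recall that $X=\Gamma\backslash G$ with $G=\widetilde{\rm SL}(2,\R)\times\R$, and that the projection $\widetilde{\rm SL}(2,\R)\to \C H^1=\widetilde{\rm SL}(2,\R)/\widetilde{\fSO}(2)$ descends to a holomorphic submersion $\pi\colon X\to C$ with elliptic fibres, where $C=\Gamma_C\backslash\C H^1$ is the compact hyperbolic base curve. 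At the Lie-algebra level, $Z_1$ is horizontal, spanning the pull-back $\pi^*T^{1,0}C$, while $Z_2$ spans the vertical $(1,0)$-space tangent to the fibres; in particular $\zeta^1=\pi^*\theta$ for a suitable $(1,0)$-coframe $\theta$ on $C$.

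Next I would transfer the asymptotics from Proposition \ref{prop_ell}. Writing $\tilde g(t):=(1{+}t)^{-1}g(t)$ in the form \eqref{leftinvmetric}, the estimates $x(t)\sim 2t$, $y(t)<y_0$ and $u(t)\to 0$ yield
\begin{equation*}
(1{+}t)^{-1}x(t)\to 2\,\,,\qquad (1{+}t)^{-1}y(t)\to 0\,\,,\qquad (1{+}t)^{-1}|z(t)|\to 0\,\,.
\end{equation*}
Thus the restriction of $\tilde g(t)$ to the vertical directions tends to zero, so each fibre (a flat torus of controlled shape) has $\tilde g(t)$-diameter going to $0$ uniformly, while the horizontal part converges to $2\,\zeta^1\odot\bar\zeta^1=2\,\pi^*(\theta\odot\bar\theta)$.

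The key step is to identify this horizontal limit with $g_{{}_{\rm KE}}$. Since $2\,\zeta^1\odot\bar\zeta^1$ descends to a $\widetilde{\rm SL}(2,\R)$-invariant Hermitian metric on $\C H^1$, it has constant negative curvature and is therefore a multiple of the hyperbolic metric; the precise factor is forced by the normalization $x\sim 2t$, which itself originates from the leading term $\dot x=2+\cdots$ in \eqref{propell} and is exactly the one producing ${\rm Ric}=-g_{{}_{\rm KE}}$. This reflects the expanding behavior $g(t)\sim t\,g_{{}_{\rm KE}}$ of the K\"ahler--Einstein flow on the base. To conclude, I would assemble a Gromov--Hausdorff $\epsilon$-approximation in the sense of Section \ref{secGH}: take $\varphi=\pi\colon(X,\tilde g(t))\to(C,g_{{}_{\rm KE}})$ and let $\psi$ be a measurable right-inverse of $\pi$. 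The uniform fibre-collapse estimate controls $d_X(x,(\psi\circ\varphi)(x))$, while the horizontal convergence controls the distortion $|d_X-d_C\circ(\pi\times\pi)|$, and the criterion recalled in Section \ref{secGH} then gives ${\rm dist}_{{}_{\rm GH}}\big((X,\tilde g(t)),(C,g_{{}_{\rm KE}})\big)\to 0$.

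The main obstacle is precisely this identification step. There are two delicate points: pinning down the normalization constant so that the limiting base metric is \emph{exactly} the K\"ahler--Einstein metric rather than an undetermined multiple of it, which requires relating $\zeta^1$ to the Maurer--Cartan forms of $\widetilde{\rm SL}(2,\R)$; and upgrading the pointwise convergence of left-invariant tensors on $G$ to convergence of the Riemannian distance functions on the quotient $X$, which is not globally homogeneous. Both the collapse of the vertical diameter and the convergence of horizontal distances must be made uniform over $X$ before the $\epsilon$-approximation estimates can be invoked.
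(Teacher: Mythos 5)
Your proposal follows essentially the same route as the paper's proof, which itself defers to the Tosatti--Weinkove argument \cite[Thm 1.6 (c)]{TW}: use the elliptic fibration $\pi: X \to C$, collapse the fiber directions spanned by $Z_2$ via the asymptotics of Proposition \ref{prop_ell} (namely $(1{+}t)^{-1}x \to 2$, $(1{+}t)^{-1}y \to 0$, $(1{+}t)^{-1}|z| \to 0$), identify the limiting horizontal metric with $g_{{}_{\rm KE}}$, and conclude with the GH $\epsilon$-approximation $(\pi, f)$ for a not-necessarily-continuous section $f$ of $\pi$. The normalization issue you flag as the main obstacle is settled in the paper simply by recording the directly computable identity $\pi^*g_{{}_{\rm KE}}=2\,\zeta^1\otimes\bar\zeta^1$, which replaces your slightly circular ``forced by $x\sim 2t$'' step with an explicit verification.
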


The proof of this statement follows the same arguments used in \cite[Thm 1.6 (c)]{TW}. For this reason, we just recall the main points.

\begin{proof} By definition, a properly elliptic surface is a compact complex surface $X$ with Kodaira dimension $\kappa(X)=1$ and first Betti number $b_1(X)$ odd admitting an elliptic fibration $\pi: X \to C$ over a compact complex curve $C$ of genus ${\rm g}(C)\geq2$. Moreover, by the Riemann Uniformization Theorem, $C$ admits a unique K\"ahler-Einstein metric $g_{{}_{\rm KE}}$ with ${\rm Ric}(g_{{}_{\rm KE}})=-g_{{}_{\rm KE}}$. Note that, this metric also satisfies $\pi^*g_{{}_{\rm KE}}=2 \zeta^1\otimes \bar \zeta^1$.

On the other hand, the fibers of the elliptic fibration $\pi: X \to C$ are spanned by the real and imaginary parts of $Z_2$, which shrinks to zero along $(1{+}t)^{-1}g(t)$ as $t\to \infty$. Therefore, if we consider a not necessarily continuous function $f: C \to S$ satisfying $\pi \circ f= {\rm id}$, then for any $\epsilon>0$ there exists $t_*(\epsilon)>0$ such that $(\pi,f)$ is a GH $\epsilon$-approximation between $\left(X,(1{+}t)^{-1}g(t)\right)$ and $(C, g_{{}_{\rm KE}})$ for any $t>t_*(\epsilon)$. This concludes the proof. \end{proof}

\subsection{Primary Kodaira surfaces} \hfill \par

The HCF on $\big(\R \times {\rm H}_3(\R),J\big)$ reduces to the ODEs system

\begin{equation}\label{IKod} 
\dot{x}=\frac{2y^2D-y^2u}{D^2} \,\, , \quad \dot{y}=-\frac{y^4}{D^2} \,\, , \quad \dot{u}=-2\frac{y^3u}{D^2} \,\, .
\end{equation} 

\begin{prop}\label{prop_K1} Let $g_0$ be a locally homogeneous Hermitian metric on a primary Kodaira surface $X$. Then, the solution $g(t)$ to the HCF starting from $g_0$ exists for all $t\geq0$. Moreover, 
$$
\left(X,(1{+}t)^{-1}g(t)\right) \GHa \text{\rm \{point\}} \quad \text{ as } t \to \infty \,\,.
$$
\end{prop}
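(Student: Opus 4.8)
The plan is to uncover a first integral of the system \eqref{IKod} that makes the flow essentially explicit, in the same spirit as the hyperelliptic case but accounting for the fact that here $x$ will \emph{grow}. First I would record the evolution of the determinant $D=xy-u$: substituting \eqref{IKod} and using $xy=D+u$ gives
$$
\dot{D}=\dot{x}\,y+x\,\dot{y}-\dot{u}=\frac{y^3}{D}>0\,\,,
$$
so $D$ is strictly increasing and $D(t)\geq D_0$ along the flow. The key observation is then that $D$ and $y$ are coupled through a conservation law, namely
$$
\frac{d}{dt}(yD)=\dot{y}\,D+y\,\dot{D}=-\frac{y^4}{D}+\frac{y^4}{D}=0\,\,,
$$
so that $yD\equiv y_0D_0$ is constant in time.

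Using this first integral I would reduce the problem to a single ODE. Writing $y=y_0D_0/D$ in $\dot D=y^3/D$ gives $\dot D=(y_0D_0)^3/D^4$, which integrates explicitly to
$$
D(t)^5=D_0^5+5(y_0D_0)^3\,t\,\,.
$$
Thus $D(t)$ is positive and finite for every $t\geq 0$, with $D(t)\sim c\,t^{1/5}$, and therefore $y(t)=y_0D_0/D(t)\sim c'\,t^{-1/5}\to 0$. Since $\dot u=-2y^3u/D^2\leq 0$ and $\dot y<0$, both $u$ and $y$ are decreasing, hence bounded by their initial values; in fact the $u$-equation integrates to $u=u_0D_0^2/D^2\to 0$. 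Long-time existence is then immediate: $D,y,u$ remain in the admissible range and $x=(D+u)/y$ stays finite for all finite $t$, so the solution never reaches the boundary of the cone $\{x>0,\,D>0\}$ and is immortal.

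For the Gromov--Hausdorff statement I would argue as in Proposition~\ref{prop_hyper}, showing that every component of the normalized metric $(1{+}t)^{-1}g(t)$ tends to zero. Because $y$ and $u$ are bounded, the normalizations $(1{+}t)^{-1}y$ and $(1{+}t)^{-1}|z|=(1{+}t)^{-1}\sqrt{u}$ trivially vanish as $t\to\infty$. The only genuinely new point is the behaviour of $x$, which is \emph{not} decreasing: here the conservation law enters decisively. From $\dot x\leq 2y^2/D=2(y_0D_0)^2/D^3$ together with $D\sim c\,t^{1/5}$ I would get $\dot x\lesssim t^{-3/5}$, whence $x(t)\lesssim t^{2/5}=o(t)$ and $(1{+}t)^{-1}x\to 0$. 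Since the largest eigenvalue of $g$ is controlled by $x+y+|z|$, the diameter of $\big(X,(1{+}t)^{-1}g(t)\big)$ then tends to $0$, giving convergence to a point.

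The main obstacle is precisely the growth of $x$: unlike on the hyperelliptic surfaces, one cannot simply observe that all metric coefficients decrease. The heart of the argument is the first integral $yD\equiv y_0D_0$, which pins down the exact rate $D\sim t^{1/5}$ and thereby forces $x$ to grow strictly slower than linearly, so that the $(1{+}t)^{-1}$ rescaling still collapses the whole metric.
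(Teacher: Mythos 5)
Your proposal is correct, and it takes a genuinely different route from the paper. The paper argues qualitatively: from \eqref{IKod} it derives $\dot D=y^3/D>0$, $\dot y<0$, $\dot u<0$, bounds $D(t)\leq\sqrt{2ty_0^3+D_0^2}$ and $x(t)\leq(2y_0^2/D_0)t+x_0$ to get immortality, and then reaches $(1{+}t)^{-1}x\to0$ via two successive contradiction arguments (first ruling out $y^2u/D^2\to\delta>0$ to conclude $\dot x\to0$, then ruling out linear growth of $x$). You instead observe that the system is completely integrable: both $yD$ and $uD^2$ are conserved along the flow, and both identities check out against \eqref{IKod}, since $\tfrac{d}{dt}(yD)=-y^4/D+y^4/D=0$ and $\tfrac{d}{dt}(uD^2)=-2y^3u+2y^3u=0$; this reduces everything to $\dot D=(y_0D_0)^3D^{-4}$, i.e. $D(t)^5=D_0^5+5(y_0D_0)^3t$, and one verifies directly that the reconstructed $x=(D+u)/y$ satisfies its equation in \eqref{IKod}, so the formulas are the actual solution. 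This yields sharp rates ($D\sim ct^{1/5}$, $y\sim c't^{-1/5}$, $u\sim c''t^{-2/5}$, $x\sim c'''t^{2/5}$) where the paper obtains only qualitative conclusions, and it makes the collapse of $(1{+}t)^{-1}g(t)$ immediate with an explicit decay rate. Your escape-to-infinity step for immortality is stated a bit loosely (the clean statement: on any finite time interval $D$ is bounded above, hence $y=y_0D_0/D$ is bounded below away from $0$ and $x=(D+u)/y$ is bounded, so the solution stays in a compact subset of the open cone $\{x>0,\,y>0,\,xy-u>0\}$ and extends), but this is routine given your explicit formulas. The trade-off is that the paper's monotonicity-plus-contradiction scheme is robust and is reused essentially verbatim for secondary Kodaira surfaces, where no such first integrals are apparent, whereas your integrability argument is special to \eqref{IKod} but strictly stronger on this case.
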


\begin{proof}
Let $T\in(0,+\infty]$ denote the maximal existence time of the flow. Then, for any $t \in [0,T)$, it holds that 
\begin{equation}\label{estDuy*}
\begin{aligned} 
\dot{D}=\frac{y^3}{D}>0 \quad&\Longrightarrow\quad D(t)\geq D_0 \,\, , \\ 
\dot{y}<0 \,\,,\quad \dot{u}<0 \quad&\Longrightarrow\quad y(t)\leq y_0 \,\,,\quad u(t)\leq u_0 
\end{aligned}
\end{equation} 
and, on the other hand
\begin{equation} \label{estx*} \begin{aligned}
\dot{D} \leq \frac{y_0^3}{D} \quad&\Longrightarrow\quad D(t)\leq \sqrt{2ty_0^3+D_0^2} \,\,, \\
\dot{x} \leq \frac{2y_0^2}{D_0} \quad&\Longrightarrow\quad x(t) \leq \Big(\frac{2y_0^2}{D_0}\Big)t+x_0 \,\,.
\end{aligned} \end{equation} 
Therefore, the long-time existence of the solution follows from \eqref{estDuy*} and \eqref{estx*}. For the second claim, we notice that
\begin{equation}\label{estdotyu*} 
\begin{aligned} 
\lim_{t\rightarrow+\infty}\dot{y}(t)=0 \quad&\Longrightarrow\quad \lim_{t\rightarrow+\infty}\frac{y^2}{D}=0 \,\, , \\
\lim_{t\rightarrow+\infty}\dot{u}(t)=0 \quad&\Longrightarrow\quad \lim_{t\rightarrow+\infty}\frac{y^3u}{D^2}=0 \,\, .
\end{aligned}\end{equation}
Now, let us suppose by contradiction that $\frac{y^2u}{D^2} \rightarrow \delta>0$, as $t\rightarrow+\infty$. From this and \eqref{estdotyu*} it comes that $$
\dot{x} \sim -\frac{y^2u}{D^2} \quad \text{ as } t \to \infty
$$ 
and hence there exist $0<\delta'<\delta$ and $t_*>0$ such that, for any $t \in [t_*,+\infty)$, it holds
$$
\dot{x}\leq-\delta' \quad \Longrightarrow\quad x(t) \leq -\delta't+x(t_*) \,\,,
$$
which is not possible. As a consequence, we have that $\dot{x}(t) \rightarrow 0$ as $t\rightarrow+\infty$. From this last claim, arguing again by contradiction, we also get $(1{+}t)^{-1}x(t) \rightarrow 0$ as $t\rightarrow+\infty$. 
\end{proof}

\subsection{Secondary Kodaira surfaces} \hfill \par

The HCF on $\big(\R \ltimes {\rm H}_3(\R),J\big)$ reduces to the ODEs system 
\begin{equation} \label{IIKod} 
\dot{x}=\frac{2y^2D-u(x^2+y^2)}{D^2} \,\, , \quad \dot{y}=-\frac{y^2+u^2}{D^2} \,\, , \quad \dot{u}=-2\frac{yu(x^2+y^2)}{D^2} \,\, .
\end{equation} 

\begin{prop}\label{prop_K2} Let $g_0$ be a locally homogeneous Hermitian metric on a secondary Kodaira surface $X$. Then, the solution $g(t)$ to the HCF starting from $g_0$ exists for all $t\geq0$. Moreover
$$
\left(X,(1{+}t)^{-1}g(t)\right)\GHa \text{\rm \{point\}} \quad \text{ as } t \to \infty \,\,.
$$
\end{prop}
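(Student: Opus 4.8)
My plan is to follow the scheme of Proposition~\ref{prop_K1}, since the system \eqref{IIKod} has the same qualitative features. Let $T\in(0,+\infty]$ denote the maximal existence time. The first step is to record the monotonicity of the coefficients. A direct computation from \eqref{IIKod} gives
\begin{equation*}
\dot{D}=\frac{y^3+ux}{D}>0 \,\, ,
\end{equation*}
so $D$ is strictly increasing and $D(t)\geq D_0$ on $[0,T)$; moreover $\dot{y}<0$ and $\dot{u}<0$, whence $y(t)\leq y_0$ and $u(t)\leq u_0$. Since $xy=D+u\geq D_0$, the variable $x$ is bounded below by $D_0/y_0>0$, so the solution never reaches the boundary of the admissible region $\{x,y>0,\ D>0\}$.

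For long-time existence I would bound the only coefficient that can grow. From
\begin{equation*}
\dot{x}=\frac{2y^2}{D}-\frac{u(x^2+y^2)}{D^2}\leq\frac{2y^2}{D}\leq\frac{2y_0^2}{D_0}
\end{equation*}
we see that $x$ grows at most linearly, and then $\dot{D}=(y^3+ux)/D$ forces $D$ to grow at most linearly as well. Hence no coefficient blows up in finite time while $D$ stays bounded away from $0$, so $T=+\infty$.

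It remains to prove the collapse. As $y$ and $u$ stay bounded, we have $(1{+}t)^{-1}y(t)\to0$, $(1{+}t)^{-1}u(t)\to0$ and $(1{+}t)^{-1}|z(t)|\to0$, so the claim reduces to showing $(1{+}t)^{-1}x(t)\to0$. Since $\dot{y}=-(y^4+u^2)/D^2$, integrating gives $\int_0^{\infty}(y^2/D)^2\,dt\leq y_0-y_{\infty}<\infty$; by Cauchy-Schwarz this yields $\tfrac1t\int_0^t (y^2/D)\,dt\to0$. Combining with the one-sided bound $\dot{x}\leq 2y^2/D$ we obtain $\limsup_{t\to\infty}x(t)/t\leq0$, and together with $x\geq D_0/y_0>0$ this forces $(1{+}t)^{-1}x(t)\to0$. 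Thus every coefficient of $(1{+}t)^{-1}g(t)$ tends to $0$, the diameter of $\big(X,(1{+}t)^{-1}g(t)\big)$ shrinks to zero, and the space converges to a point, i.e.
\begin{equation*}
\big(X,(1{+}t)^{-1}g(t)\big)\GHa\{\text{\rm point}\} \quad \text{ as } t\to\infty \,\, .
\end{equation*}

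The main obstacle is precisely the control of $x$, the only coefficient that need not stay bounded: one must rule out genuine linear growth so that it is killed by the normalization $(1{+}t)^{-1}$. The route above sidesteps a direct analysis of the negative term $u(x^2+y^2)/D^2$ in $\dot{x}$ — the term that makes the sign of $\dot{x}$ delicate, exactly as in Proposition~\ref{prop_K1} — by simply discarding it and using the one-sided estimate $\dot{x}\leq 2y^2/D$ together with the integrability of $y^2/D$. The key input is therefore the decay in average of $y^2/D$, which is where the monotonicity of $y$ enters; handling the case $y_{\infty}=0$ in a fully rigorous way (so that $\dot x \to 0$, or at least $x(t)/t\to0$) is the only genuinely technical point.
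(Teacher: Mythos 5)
Your proposal is correct, and for the collapse step it takes a genuinely different (and arguably cleaner) route than the paper. The long-time existence part coincides with the paper's: both use $\dot{D}=(y^3+xu)/D>0$, the monotonicity $\dot y<0$, $\dot u<0$, and the one-sided bound $\dot x\leq 2y^2/D\leq 2y_0^2/D_0$ to rule out finite-time blow-up. For the normalized convergence, however, the paper runs two successive contradiction arguments: first it assumes $u\to u_\infty>0$ and uses $\lim\dot y=\lim\dot u=0$ to force $y/D\to0$, $D\to\infty$ and then the impossible conclusion $y-u/x\to+\infty$, obtaining $u\to0$; second it assumes $x^2u/D^2\to\delta>0$ to get $\dot x\leq-\delta'$ eventually, a contradiction, whence $\dot x\to0$ and finally $(1+t)^{-1}x\to0$ by yet another contradiction. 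Your argument replaces all of this with a single integral estimate: from $\dot y=-(y^4+u^2)/D^2\leq-(y^2/D)^2$ you get $\int_0^\infty (y^2/D)^2\,dt\leq y_0$, and Cauchy--Schwarz then gives $\int_0^t y^2/D\,ds\leq\sqrt{y_0\,t}$, hence the quantitative bound $x(t)\leq x_0+2\sqrt{y_0\,t}$ directly from $\dot x\leq 2y^2/D$ --- strictly stronger than what the paper establishes (sublinearity), and it never needs $u\to0$ or $\dot x\to0$ at all. This also sidesteps a loose point in the paper's proof, where the existence of limits such as $\lim_{t\to\infty}\dot y(t)$ and $\lim_{t\to\infty}x^2u/D^2$ is implicitly assumed. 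One remark: you correctly used $\dot y=-(y^4+u^2)/D^2$, consistent with $K_{2\bar2}=(y^4+|z|^4)/D^2$ from Section \ref{model_sub}, whereas the displayed system \eqref{IIKod} contains a typo ($y^2$ in place of $y^4$); your argument survives either reading, since $y^4/D^2\leq y_0^2\,(y^2/D^2)$ keeps the integrability. The final step (all normalized coefficients tend to zero, so the diameter shrinks and $\big(X,(1{+}t)^{-1}g(t)\big)\GHa\{\text{point}\}$) matches the paper's conclusion.
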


\begin{proof} 
Let $T\in(0,+\infty]$ be the maximal existence time of the solution. Then, for any $t \in [0,T)$ it holds
\begin{equation*}\label{estDuy} \begin{aligned} 
\dot{D}=\frac{y^3+xu}{D}>0 \quad&\Longrightarrow\quad D(t)\geq D_0 \,\,, \\ 
\dot{y}<0 \,\,,\quad \dot{u}<0 \quad&\Longrightarrow\quad y(t)\leq y_0 \,\,,\quad u(t)\leq u_0 \,\, . 
\end{aligned} \end{equation*} 
Moreover, since
\begin{equation*} \label{estx} 
\dot{x} < \frac{2y^2}{D} \leq \frac{2y_0^2}{D_0} \quad\Longrightarrow\quad x(t)\leq \Big(\frac{2y_0^2}{D_0}\Big)t+x_0 \,\, ,
\end{equation*} it follows that $T=+\infty$. For the second claim, we firstly suppose by contradiction that $u(t)\rightarrow u_{\infty}>0$ as $t\rightarrow+\infty$. Thus, since
\begin{equation} \label{estdotyu}\begin{aligned} 
\lim_{t\rightarrow+\infty}\dot{y}(t)&=0 \quad\Longrightarrow\quad \lim_{t\rightarrow+\infty}\frac{y}{D}=\lim_{t\rightarrow+\infty}\frac{u}{D}=0 \,\,, \\ 
\lim_{t\rightarrow+\infty}\dot{u}(t)&=0 \quad\Longrightarrow\quad \lim_{t\rightarrow+\infty}\frac{x^2yu}{D^2}=0 \,\,,
\end{aligned}  \end{equation} 
we have
\begin{equation*} 
0 \leq \frac{u_{\infty}}{D} \leq \frac{u}{D} \rightarrow 0 \quad\Longrightarrow\quad \lim_{t\rightarrow+\infty}D(t)=+\infty \quad\Longrightarrow\quad \lim_{t\rightarrow+\infty}x(t)y(t)=+\infty \,\,. \end{equation*} 
On the other hand, it follows by \eqref{estdotyu} that
$$
\frac{x^2yu}{D^2}=\frac1{1-\frac{u}{xy}}\cdot u \cdot \frac1{y-\frac{u}{x}} \rightarrow 0 \quad\Longrightarrow\quad y-\frac{u}{x} \rightarrow +\infty
$$ 
which is not possible, and hence $u(t)\rightarrow 0$ as $t\rightarrow+\infty$. 

Finally, let us assume by contradiction that $\frac{x^2u}{D} \rightarrow \delta>0$ as $t\rightarrow+\infty$. Then we get
$$
\dot{x} \sim -\frac{x^2u}{D^2} \quad \text{ as } t \to \infty
$$ and so there exist $0<\delta'<\delta$ and $t_*>0$ such that, for any $t \in [t_*,+\infty)$, we have 
$$
\dot{x}<-\delta' \quad\Longrightarrow\quad x(t) \leq -\delta't+x(t_*) \,\, ,
$$ 
which is absurd. Consequently it comes $\dot{x}(t) \rightarrow 0$ as $t\rightarrow+\infty$. Arguing again by contradiction, we finally get $(1{+}t)^{-1}x(t) \rightarrow 0$ as $t\rightarrow+\infty$. 
\end{proof}

\subsection{Inoue surfaces of type $S^0$} \hfill \par

The HCF on $\big({\rm Sol}_0^4,J_{a,b}\big)$ reduces to the ODEs system
\begin{equation}\left.\begin{aligned}
\dot{x} &= -(9a^2+b^2)\frac{x^2u}{D^2} \\
\dot{y} &= 8a^2-(9a^2+b^2)\Big(\frac{u}{D}\Big)^2 \\
\dot{u} &= -2(9a^2+b^2)\frac{x^2u}{D^2}y
\end{aligned}\right.. \end{equation}

\begin{prop}\label{prop_S0}\label{Inoue0} Let $g_0$ be a locally homogeneous Hermitian metric on an Inoue surfaces $X$ of type $S^0$. Then, the solution $g(t)$ to the HCF starting from $g_0$ exists for all $t\geq0$. In particular, $y(t)\sim 8a^2\,t$ and $x(t)<x_0$, $u(t)<u_0$ for any $t>0$.
\end{prop}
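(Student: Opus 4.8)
The plan is to read off the sign information from the system, isolate a monotone quantity that governs the asymptotics, and play a determinant lower bound against the (at most) linear growth of $y$. Throughout set $c:=9a^2+b^2$, and recall that for an Inoue surface of type $S^0$ the existence of a co-compact lattice forces $a\neq0$, so that $8a^2>0$ and $c>0$; as usual write $D=xy-u$ with $u=|z|^2$ and let $[0,T)$ be the maximal interval of existence. Since $x,y>0$, $u\geq0$ and $c>0$, the first and third equations immediately give
\begin{equation*}
\dot x=-c\,\frac{x^2u}{D^2}\leq0\,,\qquad \dot u=-2c\,\frac{x^2 u y}{D^2}\leq0\,,
\end{equation*}
both strict whenever $z\neq0$; this yields the stated bounds $x(t)\leq x_0$ and $u(t)\leq u_0$. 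Computing $\dot D=\dot x\,y+x\,\dot y-\dot u$ and substituting the three equations, everything collapses to the clean identity
\begin{equation*}
\dot D=x\Big(8a^2+c\,\frac{u}{D}\Big)>0\,,
\end{equation*}
so $D$ is increasing and $D(t)\geq D_0>0$ for all $t$. This lower bound on the determinant is the engine of the whole argument.

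Next I would establish $T=+\infty$ by a standard escape argument. From $\dot y=8a^2-c(u/D)^2\leq 8a^2$ one gets the linear bound $y(t)\leq y_0+8a^2 t$; combined with $x\leq x_0$, $u\leq u_0$, $D\geq D_0$ and the consequent $x\geq D_0/y>0$, this confines $(x,y,u)$ on any finite interval to a compact subset of the region $\{x>0,\ y>0,\ D>0\}$ on which the right-hand side of the system is smooth and bounded. Hence the solution cannot leave in finite time, and $T=+\infty$.

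For the sharp asymptotic the crux is to show $u/D\to0$. I would first verify by direct differentiation that
\begin{equation*}
\frac{d}{dt}\Big(\frac{u}{D}\Big)=-2c\,\frac{x^2uy}{D^3}-8a^2\,\frac{xu}{D^2}-c\,\frac{xu^2}{D^3}<0\,,
\end{equation*}
so $u/D$ decreases to a limit $\ell\geq0$. Suppose for contradiction that $\ell>0$. Then $u/D\geq\ell$ for all $t$, which gives on one hand $u\geq\ell D\geq\ell D_0>0$, and on the other $xy=D+u\geq(1+\ell)D$, hence $x^2y/D^2\geq(1+\ell)^2/y$. Feeding this into the third equation,
\begin{equation*}
\frac{\dot u}{u}=-2c\,\frac{x^2y}{D^2}\leq-2c(1+\ell)^2\,\frac1{y_0+8a^2 t}\,,
\end{equation*}
and since $\int_0^\infty (y_0+8a^2 t)^{-1}\,dt=+\infty$ we conclude $u(t)\to0$, contradicting $u\geq\ell D_0>0$. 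Therefore $\ell=0$, i.e. $u/D\to0$, whence $\dot y=8a^2-c(u/D)^2\to 8a^2$, and from $y(t)/t=y_0/t+\tfrac1t\int_0^t\dot y\,ds$ we obtain $y(t)\sim 8a^2 t$, as claimed.

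The main obstacle is the third step, ruling out $\ell>0$. The delicate balance is that the determinant lower bound $D\geq D_0$ forces $u$ to stay bounded away from zero as soon as $\ell>0$, while the at-most-linear growth of $y$ makes $\int^{\infty}dt/y$ diverge and thereby forces $u\to0$; it is precisely this tension that produces the contradiction. Recognizing $u/D$ as the correct monotone quantity — rather than $u$ or $u/y^2$, which were the natural choices in the elliptic and Kodaira cases — is the conceptual heart of the proof, and the determinant identity $\dot D=x(8a^2+cu/D)$ is what makes all the required monotonicity explicit.
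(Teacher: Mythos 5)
Your proof is correct. It shares the paper's skeleton --- the sign analysis $\dot x\le 0$, $\dot u\le 0$, the identity $\dot D=x\bigl(8a^2+c\,u/D\bigr)>0$ giving $D\ge D_0$, long-time existence by confinement, and the reduction of $y\sim 8a^2t$ to showing $u/D\to 0$ --- but the contradiction ruling out $\lim u/D=\ell>0$ is executed differently, and your version is the more economical one. The paper, assuming $u\to u_\infty>0$ and $D\to D_\infty<\infty$, first sandwiches $\dot x$ between $-k_1x^2$ and $-x^2/k_1$ to obtain $x(t)\asymp 1/t$, then sandwiches $\dot u\asymp -x$ and integrates to force $u(t)\to-\infty$, which is absurd; this requires the two-sided constants $k_1,k_2$ and, implicitly, the boundedness of $xy$. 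You instead note $xy=D+u\ge(1+\ell)D$, hence $\dot u/u=-2c\,x^2y/D^2\le-2c(1+\ell)^2/y\le-2c(1+\ell)^2/(y_0+8a^2t)$, and the divergence of $\int^\infty dt/(y_0+8a^2t)$ drives $u\to0$, contradicting $u\ge\ell D_0>0$ --- a single one-sided differential inequality in place of four estimates, though both contradictions ultimately rest on the same logarithmic divergence. Two minor remarks: your explicit computation of $\tfrac{d}{dt}(u/D)$ is superfluous, since $u$ decreasing and $D$ increasing already make $u/D$ monotone (this monotonicity also justifies that the limit $\ell$ exists at all, a point the paper leaves implicit when it assumes ``$u/D\to\delta$''); and your linear bound $y\le y_0+8a^2t$, used both for existence and in the final inequality, is sharper than the paper's exponential bound $y\le y_0e^{kt}$ in its existence step. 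Note also that the strict inequalities $x(t)<x_0$, $u(t)<u_0$ in the statement require $z_0\neq0$, as you correctly flag; when $z_0=0$ the flow degenerates to $x\equiv x_0$, $u\equiv 0$, $y=y_0+8a^2t$, a case both arguments cover trivially.
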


\begin{proof} 
Let $T\in(0,+\infty]$ denotes the maximal existence time of the solution. For any $t \in [0,T)$ we have
\begin{equation*} \begin{aligned} 
\dot{D}=8a^2x+(9a^2+b^2)\frac{xu}{D}>0 \,\, , \quad&\Longrightarrow\quad D(t)\geq D_0 \,\,, \\ 
\dot{x}<0 \,\,,\quad \dot{u}<0 \quad&\Longrightarrow\quad x(t)\leq x_0 \,\,,\quad u(t)\leq u_0 \,\,.
\end{aligned} \end{equation*} 
Moreover, since
\begin{equation*}
\dot y\leq\frac {8a^2xy}{D}<\frac {8a^2x_0y}{D_0}\quad \Longrightarrow\quad y< y_0e^{kt}\,\,,
\end{equation*} 
where $k:=\frac {8a^2x_0}{x_0y_0-|z_0|^2}$, it follows that $T=+\infty$.

For the second claim, let us assume by contradiction that $\frac{u}{D} \rightarrow \delta>0$, i.e. $u \rightarrow u_{\infty}>0$ and $D \rightarrow D_{\infty}<\infty$. Then, there exists a finite time $t_*>0$ and a constant $k_1>1$ such that, for any $t\geq t_*$,
$$
-k_1 x(t)^2 \leq \dot{x}(t) \leq -\tfrac1{k_1} x(t)^2
$$ 
and hence 
\begin{equation} \frac1{k_1(t-t_*)+\frac1{x(t_*)}} \leq x(t) \leq \frac1{\frac1{k_1}(t-t_*)+\frac1{x(t_*)}}
\label{x(t)abs} 
\end{equation} 
Up to enlarge $t_*$, we can also assume that there exists $k_2 >1$ such that
$$
-k_2 x(t) \leq \dot{u}(t) \leq -\tfrac1{k_2} x(t) \quad \text{ for any $t\geq t_*$} 
$$ 
and so, by means of \eqref{x(t)abs} 
$$
-k_2\frac1{\frac1{k_1}(t-t_*)+\frac1{x(t_*)}} \leq \dot{u}(t) \leq -\tfrac1{k_2}\frac1{k_1(t-t_*)+\frac1{x(t_*)}}\,\,, 
$$ 
for any $t\geq t_*$. This leads us to 
$$
u(t_*)-k_1k_2\log\big(\tfrac{x(t_*)}{k_1}(t-t_*)+1\big) \leq u(t) \leq u(t_*)-\tfrac1{k_1k_2}\log\big(k_1x(t_*)(t-t_*)+1\big)\,\,, 
$$ 
for any $t\geq t_*$, and hence $\lim_{t \to +\infty}u(t)=-\infty$, which is not possible. Therefore, $\frac{u}{D} \rightarrow 0$ must hold and we have 
$$
\dot{y}(t) \rightarrow 8a^2
$$ 
as $t \rightarrow +\infty$.
\end{proof}

Then, in view of this result, we have 

\begin{prop}\label{GH_inoueS0} Let $X$ be an Inoue surface of type $S^0$ and $g(t)$ be a locally homogeneous solution to the HCF on $X$. Then
$$
\left(X,(1{+}t)^{-1}g(t)\right)\GHa S^1\big(\tfrac{\sqrt2a}{\pi}\big) \quad \text{as}\,\,\, t\to \infty \,\,,
$$ 
where $S^1\big(\tfrac{\sqrt2a}{\pi}\big)=\big\{z \in \C : |z|=\frac{\sqrt2a}{\pi}\big\}$ is the circle of length $2\sqrt2a$. \end{prop}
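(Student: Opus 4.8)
The plan is to exhibit $X$ as a $T^3$-bundle over a base circle, to use Proposition \ref{prop_S0} to decide which directions collapse and which survive under the normalization $(1{+}t)^{-1}g(t)$, and finally to package the resulting picture into a Gromov--Hausdorff $\epsilon$-approximation onto the base circle; this mirrors the strategy of \cite[Thm 1.6]{TW}.

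First I would record the solvmanifold structure of $X={\rm Sol}_0^4/\Gamma$. In the real frame $P:=Z_1+\bar Z_1$, $Q:=\sqrt{-1}(Z_1-\bar Z_1)$, $E:=Z_2+\bar Z_2$, $F:=\sqrt{-1}(Z_2-\bar Z_2)$, the brackets $\mu_{a,b}$ give $[P,Q]=[P,E]=[Q,E]=0$, so that $\n:=\mathrm{span}\{E,P,Q\}$ is abelian, while $\mathrm{ad}_F$ preserves $\n$ with eigenvalues $4a$ on $E$ and $-2a\pm2\sqrt{-1}b$ on $\mathrm{span}\{P,Q\}$. Hence $N:=\exp\n\cong\R^3$ is a normal abelian subgroup, $G/N\cong\R$ is generated by $F$, and the co-compact lattice splits: $\Lambda:=\Gamma\cap N$ is a lattice of $\R^3$ producing the fibre $T^3=\Lambda\backslash N$, while a complementary generator projects to $1\in G/N$ and acts on $\Lambda$ by the integral monodromy $\exp(\mathrm{ad}_F|_{\n})\in{\rm SL}(3,\Z)$, whose expanding eigenvalue is $\alpha=e^{4a}$. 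This yields a fibration $\bar\pi:X\to S^1=\R/\Z$ with fibre $T^3$. From \eqref{leftinvmetric} I read off $g(P,P)=g(Q,Q)=x$, $g(E,E)=g(F,F)=y$ and $g(F,E)=0$, all remaining inner products being $O(|z|)$.

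Next I would extract the asymptotics from Proposition \ref{prop_S0}: $x(t)<x_0$ and $u(t)<u_0$ stay bounded while $y(t)\sim8a^2t$. Therefore, along $(1{+}t)^{-1}g(t)$, the fibre directions $P,Q$ have squared length $(1{+}t)^{-1}x\to0$ and shrink, the base direction $F$ and the remaining fibre direction $E$ both have squared length $(1{+}t)^{-1}y\to8a^2$, and every mixed term, being $O\big((1{+}t)^{-1}|z|\big)$, tends to $0$; in particular $F$ becomes orthogonal to the fibre in the limit. Since the base loop carried by $F$ has $\R/\Z$-period $1$, its normalized length converges to $\sqrt{8a^2}=2\sqrt2\,a$, which is exactly the circumference of $S^1(\tfrac{\sqrt2 a}{\pi})$.

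The main obstacle is to show that each fibre $T^3$ nevertheless collapses to a point, even though the $E$-direction does not shrink. The mechanism is arithmetic: the $E$-components of the vectors in $\Lambda$ are the integer combinations of the coordinates $a_1,a_2,a_3$ of the expanding eigenvector of the monodromy, which are $\mathbb{Q}$-linearly independent because $\alpha$ is irrational; hence they form a dense subset of $\R$. Concretely, given $\epsilon>0$ I would fix once and for all a finite family of lattice vectors whose $E$-components are $\epsilon$-dense in a fundamental interval; these vectors are independent of $t$, so their $(1{+}t)^{-1}g(t)$-lengths in the collapsing $P,Q$-directions become $<\epsilon$ for $t$ large, while their $E$-parts allow me to join any two points of a fibre up to an $E$-defect $<\epsilon$. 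Summing these contributions bounds the fibre diameter by a fixed multiple of $\epsilon$ for all large $t$. Finally I would assemble the pair $(\bar\pi,f)$, with $f$ an arbitrary (not necessarily continuous) section of $\bar\pi$, into a GH $\epsilon$-approximation between $\big(X,(1{+}t)^{-1}g(t)\big)$ and $S^1(\tfrac{\sqrt2 a}{\pi})$: the vanishing of the mixed terms and the convergence of the base metric control the distortion of base distances, while the fibre-diameter estimate yields $d\big(x,(f\circ\bar\pi)(x)\big)<\epsilon$. The estimate ${\rm dist}_{{}_{\rm GH}}\le\tfrac32\epsilon$ recalled in Subsection \ref{secGH} then gives the stated convergence as $t\to\infty$.
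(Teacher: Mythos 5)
Your proposal follows the same route as the paper's proof: realize $X$ as a $T^3$-fibration $\pi\colon X\to S^1$ coming from the solvmanifold structure, read off from Proposition \ref{prop_S0} that the normalized metric degenerates to a rank-one tensor whose surviving direction is transverse-plus-$E$, compute the limit length of a distinguished section, and collapse the fibers because the kernel distribution is dense in them. The one genuine difference is that where the paper simply invokes \cite[Lemma 5.2]{TW} and \cite[Cor 3.18]{Bol} for the fiber collapse, you reprove it by hand: the $E$-components of $\Lambda$ are the group $\Z v_1+\Z v_2+\Z v_3$, dense in $\R$ by $\Q$-linear independence of the eigenvector coordinates (which holds since the characteristic polynomial of the monodromy matrix, having a non-real pair of roots and an irrational real root, is irreducible over $\Q$), and a fixed finite $\epsilon$-dense family of lattice vectors joins fiber points at a cost that is $O(\epsilon)$ in the non-shrinking $E$-direction and $o(1)$ in the shrinking $P,Q$-directions. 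This quantitative version is correct and arguably more self-contained than the paper's citation; the assembly into a GH $\epsilon$-approximation $(\pi,f)$ with the bound ${\rm dist}_{{}_{\rm GH}}\leq\tfrac32\epsilon$ matches the paper's scheme exactly.

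One normalization slip should be fixed, even though it does not affect your final constant. In the paper's explicit matrix model the lattice monodromy is generated by $h_0$, i.e. by the time-one flow of $X_4$, so the monodromy is $\exp(\mathrm{ad}_{X_4}|_{\mathfrak{n}})$ with expanding eigenvalue $e^{2\sqrt2 a}$ (these are precisely the eigenvalues of $A\in{\rm SL}(3,\Z)$), and the period-one section along $X_4$ has $\ell_{g(t)}(\gamma)=\sqrt{y(t)}$, i.e. $g(X_4,X_4)=y$. Since your $F=\sqrt{-1}(Z_2-\bar Z_2)$ equals $\sqrt2\,X_4$, your two assertions --- that the integral monodromy is $\exp(\mathrm{ad}_F|_{\mathfrak{n}})$ with expanding eigenvalue $e^{4a}$, and that $g(F,F)=y$ --- cannot both hold: with the paper's conventions $g(F,F)=2y$, while $\exp(\mathrm{ad}_F|_{\mathfrak{n}})$ is not the lattice monodromy (its eigenvalue $e^{4a}=(e^{2\sqrt2 a})^{\sqrt2}$ is not an integral power of the correct one). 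As written, your normalized base length comes out as $\sqrt{8a^2}=2\sqrt2\,a$ only because these two $\sqrt2$-errors cancel; the clean statement is the paper's, namely that the monodromy loop is the $X_4$-orbit of period one, of normalized length $\sqrt{(1{+}t)^{-1}y(t)}\to 2\sqrt2\,a$. With that bookkeeping repaired, your argument is a complete and valid proof.
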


In order to prove this statement, we recall the underlying geometry of the Inoue surfaces of type $S^0$. Let $a,b\in\R$, with $a>0$ and $b\neq0$, and $A \in {\rm SL}(3,\Z)$ be a matrix with eigenvalues 
$$
e^{2\sqrt2a}\,\,,\quad e^{\sqrt2(-a+\sqrt{-1}b)}\,\,,\quad e^{\sqrt2(-a-\sqrt{-1}b)}\,\,.
$$
The pair $G_{a,b}:=\big({\rm Sol}_0^4,J_{a,b}\big)$ can be realized as the group of complex $3\times3$ matrices of the form 
$$
G_{a,b} = \left\{M(p,q,r,s):=\left({\scalefont{0.8} 
\begin{array}{ccc} \!e^{s\sqrt2(-a+\sqrt{-1}b)}\! & \!0\! & \!p+\sqrt{-1}q\! \\ \!0\! & \!e^{s2\sqrt2a}\! & \!r\! \\ \!0\! & \!0\! & \!1\! 
\end{array}}\right) \, : \quad p, q, r, s \in \R \right\} \,\, .
$$ 
Indeed, let $\{E^i_j\}$ denote the standard basis of $\g\l(3,\C)$. Then the Lie algebra $\g_{a,b}:={\rm Lie}(G_{a,b}) \subset \g\l(3,\C)$ is the $\R$-span of 
$$
X_1:=(1-\sqrt{-1})E^1_3 \,\, , \quad X_2:=(1+\sqrt{-1})E^1_3 \,\, , \quad X_3:=E^2_3 \,\, , \quad X_4:=\sqrt2(-a+\sqrt{-1}b)E^1_1+2\sqrt2a\,E^2_2 \,\,.
$$ 
Since the structure constants of $\g_{a,b}$ with respect to $\{X_i\}$ are given by
\begin{equation*} [X_1,X_4]=\sqrt2aX_1-\sqrt2bX_2 \,\, , \quad [X_2,X_4]=\sqrt2bX_1+\sqrt2aX_2 \,\, , \quad [X_3,X_4]=-2\sqrt2aX_3 \,\,,
\label{bracketS0R} \end{equation*}
setting 
\begin{equation*} 
Z_1:=\frac{X_1-\sqrt{-1}X_2}{\sqrt2} \,\, , \quad Z_2:=\frac{X_3-\sqrt{-1}X_4}{\sqrt2} \,\, , \label{frameRC} 
\end{equation*} 
one obtains the structure constants given in Section \ref{model_sub}. Let now $(v_1,v_2,v_3)^t\in \R^3$ and $(w_1,w_2,w_3)^t \in \C^3$ be the eigenvectors of $e^{2\sqrt2a}$ and $e^{\sqrt2(-a+\sqrt{-1}b)}$, respectively, and consider the lattice $\Gamma_{a,b} \subset G_{a,b}$ generated by 
$$
h_0:=\left({\scalefont{0.8} \begin{array}{ccc} \!e^{\sqrt2(-a+\sqrt{-1}b)}\! & \!0\! & \!0\! \\ \!0\! & \!e^{2\sqrt2a}\! & \!0\! \\ \!0\! & \!0\! & \!1\! \end{array}}\right) \,\, , \quad h_i:=\left({\scalefont{0.8} \begin{array}{ccc} \!1\! & \!0\! & \!w_i\! \\ \!0\! & \!1\! & \!v_i\! \\ \!0\! & \!0\! & \!1\! \end{array}}\right) \,\, , \quad i=1,2,3 \,\, .
$$ 
Then the left action of $\Gamma_{a,b}$ on $G_{a,b}$ is explicitly given by
\begin{equation} \begin{aligned}
h_0 \cdot M(p,q,r,s) &= M\big(e^{-\sqrt2a}(\cos(\sqrt2b)p-\sin(\sqrt2b)q),e^{-\sqrt2a}(\sin(\sqrt2b)p+\cos(\sqrt2b)q),e^{2\sqrt2a}r,s+1\big) \\
h_i \cdot M(p,q,r,s) &= M(p+{\rm Re}(w_i),q+{\rm Im}(w_i),r+v_i,s)
\end{aligned} \label{actionGamma} \end{equation} 
and the quotient $X=\Gamma_{a,b}\backslash G_{a,b}$ is an Inoue surface of type $S^0$.

\begin{proof}[Proof of Proposition \ref{GH_inoueS0}] Let $X=\Gamma_{a,b}\backslash G_{a,b}$ be an Inoue surface of type $S^0$ and $g(t)$ a locally homogeneous solution to the HCF on $X$. By \eqref{actionGamma}, the projection 
$$
G_{a,b} \to \R \,\, , \quad M(p,q,r,s) \mapsto s
$$ 
factorizes to a map $\pi: X \to S^1=\R/\Z$, which is a fibration with standard fiber $T^3$ (see \cite{Ino}). On the other hand, the path 
$$
\R \to G_{a,b} \,\, , \quad s \mapsto M(0,0,0,s)
$$ 
factorizes to a section $\gamma: S^1=\R/\Z \to X$ whose length with respect to $g(t)$ is 
\begin{equation}
\ell_{g(t)}(\gamma)=\sqrt{y(t)} \,\, .
\label{lengthcircle} \end{equation} 
Notice also that by Proposition \ref{Inoue0} 
$$
(1{+}t)^{-1}g(t) \to \tilde{g}_{\infty} := \left({\scalefont{0.8} \begin{array}{cc} \!0\! & \!0\! \\ \!0\! & \!8a^2\! \end{array}}\right) \quad \text{ as $t \to \infty$ }\,\,.
$$ 
Moreover, in analogy with \cite[Lemma 5.2]{TW}, the kernel of $\tilde{g}_{\infty}$ is the integrable distribution $\mathcal D$ spanned by $X_1,X _2$, which is dense inside any fiber of $\pi$. Finally, the claim follows by \eqref{lengthcircle} and this last observation (see e.g. \cite[Cor 3.18]{Bol}).
\end{proof}

\subsection{Inoue surfaces of type $S^\pm$} \hfill \par

The HCF on $\big({\rm Sol}_1^4,J_{1}\big)$ reduces to the ODEs system \begin{equation} \label{InoueS+}
\begin{aligned}
\dot{x}= 3-\frac {u|z-\bar{z}|^2}{D^2} \,\, , \quad \dot{y}&= -\frac{y^2|z-\bar{z}|^2}{D^2} \,\, , \quad \dot{u}= -\frac{2xy^2|z-\bar{z}|^2}{D^2}\,.
\end{aligned} \end{equation} 

\begin{prop}\label{Inoue+} Let $g_0$ be a locally homogeneous Hermitian metric on an Inoue surfaces $X$ of type $S^\pm$ obtained by $\big({\rm Sol}_1^4,J_{1}\big)$. Then, the solution $g(t)$ to the HCF starting from $g_0$ exists for all $t\geq0$. In particular, $x(t)\sim 3t$ and $y(t)<y_0$, $u(t)<u_0$ for any $t>0$.
\end{prop}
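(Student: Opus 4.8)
The plan is to run the same strategy used for the other solvable cases, exploiting the monotonicity of the determinant $D=xy-u$ together with that of the off-diagonal data. First I would record the elementary sign information coming directly from \eqref{InoueS+}. Writing $w:=|z-\bar z|^2\ge 0$ (so that $K_{2\bar2}=\frac{y^2w}{D^2}$ and $K_{1\bar1}=-3+\frac{uw}{D^2}$), the equations give $\dot y=-\frac{y^2w}{D^2}\le 0$ and $\dot u=-\frac{2xy^2w}{D^2}\le 0$, whence $y(t)\le y_0$ and $u(t)\le u_0$, with strict inequalities as soon as $w>0$. A short computation then yields
\[
\dot D=\dot x\,y+x\,\dot y-\dot u=3y+\frac{yw}{D}>0 ,
\]
so $D$ is increasing and $D(t)\ge D_0>0$ for all $t$.

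Second, I would deduce long-time existence. Since $\dot x=3-\frac{uw}{D^2}$ and $0\le w\le 4u\le 4u_0$, we have the two-sided linear control $3-\frac{4u_0^2}{D_0^2}\le\dot x\le 3$, so $x$ grows at most linearly and stays bounded on finite intervals; likewise $y$ cannot vanish in finite time because $\frac{d}{dt}(1/y)=\frac{w}{D^2}\le\frac{4u_0}{D_0^2}$ is bounded. Thus on any finite interval $(x,y,u,w)$ remains in a compact subset of the admissible region $\{x,y>0,\ D>0\}$, so the maximal time is $T=+\infty$.

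The crux is the asymptotics $x(t)\sim 3t$, equivalently $\frac{uw}{D^2}\to 0$. The subtle point, special to this geometry, is that the system \eqref{InoueS+} does \emph{not} close in $(x,y,u)$: the factor $w=|z-\bar z|^2$ is not a function of $u=|z|^2$, so one must also follow $w$. Differentiating $\dot z=-K_{1\bar2}$ (equivalently, tracking $\operatorname{Im}z$) gives $\dot w=-\frac{8yw}{D}-\frac{2yw^2}{D^2}\le 0$, so $w$ is non-increasing. Hence $uw$ is non-increasing while $D^2$ is non-decreasing, and therefore $\frac{uw}{D^2}$ is non-increasing and converges to some $L\ge 0$.

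Finally I would rule out $L>0$, which is where the real work lies. If $L>0$ then, since $u\downarrow u_\infty$ and $D\uparrow D_\infty$, the bound $w\le 4u$ forces $D_\infty<\infty$ and $u_\infty,w_\infty>0$; from $\dot D\ge 3y$ and $D$ bounded one gets $\int_0^\infty y\,dt\le\frac{D_\infty-D_0}{3}<\infty$. On the other hand $\frac{d}{dt}(1/y)=\frac{w}{D^2}\to\frac{w_\infty}{D_\infty^2}>0$, so $1/y$ grows linearly, $y(t)\gtrsim 1/t$, and $\int_0^\infty y\,dt=+\infty$ --- a contradiction. Hence $L=0$, so $\dot x\to 3$ and consequently $x(t)/t\to 3$, i.e. $x(t)\sim 3t$. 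I expect this last contradiction step, together with the preliminary realization that one must augment the state by $w$ in order to obtain the sign $\dot w\le 0$, to be the main obstacles; the monotonicity of $D$ and the long-time existence bounds are routine.
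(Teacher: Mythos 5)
Your proof is correct, and its key step takes a genuinely different route from the paper's. The preliminaries coincide: both arguments use $\dot y\le 0$, $\dot u\le 0$, $\dot D=3y+\frac{yw}{D}\ge 0$ and $\dot x\le 3$ to get $T=+\infty$ (your compactness bookkeeping, including $\frac{d}{dt}(1/y)=\frac{w}{D^2}$ bounded, is more explicit than the paper's one-line conclusion, and your remark that strict monotonicity of $y,u$ requires $w>0$ is a point the paper glosses over). For the asymptotics $x(t)\sim 3t$, however, the paper argues by contradiction assuming $\frac{|z-\bar z|}{D}\to\epsilon>0$: it then traps $\dot y$ between multiples of $-y^2$ to get $y\sim 1/t$, traps $\dot u$ between multiples of $-y$, and integrates to force $u\to-\infty$, which is absurd. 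You instead augment the state with $w=|z-\bar z|^2$, compute $\dot w=-\frac{8yw}{D}-\frac{2yw^2}{D^2}\le 0$ directly from $\dot z=-K_{1\bar 2}$ (I checked this formula: with $z=p+\sqrt{-1}q$ one gets $\dot q=-\frac{4yq(xy-p^2)}{D^2}$ and $xy-p^2=D+w/4$, so your expression is exact), deduce that $\frac{uw}{D^2}$ is monotone and hence has a limit $L\ge0$, and rule out $L>0$ by the integral comparison: $\dot D\ge 3y$ with $D$ bounded gives $\int_0^\infty y\,dt<\infty$, while $\frac{d}{dt}(1/y)\to w_\infty/D_\infty^2>0$ gives $y\gtrsim 1/t$, hence divergence. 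Both contradictions ultimately exploit the non-integrability of $1/t$, but yours buys something real: the monotonicity of $\frac{w}{D^2}$ guarantees that the relevant limit \emph{exists}, whereas the paper's dichotomy ("limit is $0$ or limit is $\epsilon>0$") tacitly assumes convergence — the negation of $\lim=0$ is only $\limsup>0$. Your observation that the system \eqref{InoueS+} does not close in $(x,y,u)$ and must be augmented by $w$ is also correct and unaddressed in the paper; in this respect your argument patches a small lacuna in the published proof.
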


\begin{proof} Let $T\in(0,+\infty]$ be the maximal existence time of the flow. Then, for any $t \in [0,T)$, we have
\begin{equation}\label{decr_S+} 
\begin{aligned} 
\dot{D}=3y+\frac{y|z-\bar z|^2}{D}\geq 0 \,\,,& \\ 
\dot{y}<0 \,\,, \quad \dot{u}<0 \quad\Longrightarrow\quad y(t)\leq y_0 \,\,,& \quad u(t)\leq u_0 \,\,. 
\end{aligned}
\end{equation}
On the other hand
$$
\dot{x}= 3-\frac {u|z-\bar z|^2}{D^2}\leq 3\quad \Longrightarrow\quad x(t)\leq 3t+x_0
$$
and the long-time existence follows, i.e. $T=+\infty$. Finally, to conclude the proof it is enough to show 
\begin{equation}\label{lim-rap}
\lim_{t\to \infty} \frac {|z-\bar z|}{D} =0\,\,.
\end{equation}
Let us assume by contradiction that $\frac{|z-\bar{z}|}{D} \rightarrow \epsilon>0$. Then, by the means of \eqref{InoueS+} and \eqref{decr_S+}, there exists $t_*>0$ and a constant $k_1>1$ such that 
$$
-k_1 y(t)^2 \leq \dot{y}(t) \leq -\frac1{k_1} y(t)^2 \quad \text{ for any $t\geq t_*$ } \,\,.
$$ 
This in turn implies, for any $t\geq t_\ast$,
\begin{equation} 
\frac1{k_1(t-t_*)+\frac1{y(t_*)}} \leq y(t) \leq \frac1{\frac1{k_1}(t-t_*)+\frac1{y(t_*)}}\,\, . \label{y(t)abs} 
\end{equation} 
Besides, up to enlarge $t_*$, there also exists a constat $k_2 >1$ such that 
$$
-k_2 y(t) \leq \dot{u}(t) \leq -\tfrac1{k_2} y(t) \quad \text{ for any $t\geq t_*$} \,\,.
$$ 
Therefore, since \eqref{y(t)abs} holds, for any $t\geq t_\ast$ we have
$$
-k_2\frac1{\frac1{k_1}(t-t_*)+\frac1{y(t_*)}} \leq \dot{u}(t) \leq -\frac1{k_2}\frac1{k_1(t-t_*)+\frac1{y(t_*)}}
$$
and 
$$
u(t_*)-k_1k_2\log\left(\frac{y(t_*)}{k_1}(t-t_*)+1\right) \leq u(t) \leq u(t_*)-\frac1{k_1k_2}\log\left(k_1y(t_*)(t-t_*)+1\right)\,\,.
$$ 
Nonetheless, this would imply $\lim_{t\rightarrow +\infty}u(t)=-\infty$, which is not possible. Hence, \eqref{lim-rap} holds and $x\sim3t$ follows.
\end{proof}

The HCF on $\big({\rm Sol}_1^4,J_{2}\big)$ reduces to the ODEs system 
\begin{equation} \label{InoueS-}
\left.\begin{aligned}
\dot x &=3+\frac{2y^2}{D}-\frac{u(y^2+|z-\bar{z}|^2)}{D^2} \\
\dot y &=-\frac{y^2(|z-\bar{z}|^2+y^2)}{D^2}\\
\dot u&=-\frac{2y^2(x|z-\bar{z}|^2+yu)}{D^2}
\end{aligned}\right.. \end{equation}

\begin{prop}\label{Inoue-} Let $g_0$ be a locally homogeneous Hermitian metric on an Inoue surfaces $X$ of type $S^+$ obtained by $\big({\rm Sol}_1^4,J_{2}\big)$. Then, the solution $g(t)$ to the HCF starting from $g_0$ exists for all $t\geq0$. In particular, $x(t)\sim 3t$ and $y(t)<y_0$, $u(t)<u_0$ for any $t>0$.
\end{prop}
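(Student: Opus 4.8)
The plan is to mirror the scheme of Proposition~\ref{Inoue+} for $\big({\rm Sol}_1^4,J_1\big)$, adjusting the estimates to the extra terms in \eqref{InoueS-}. First I would record signs: from \eqref{InoueS-} one reads off $\dot y<0$ and $\dot u<0$ (the numerators $y^2(|z-\bar z|^2+y^2)$ and $2y^2(x|z-\bar z|^2+yu)$ being positive), so $y(t)<y_0$ and $u(t)<u_0$ for $t>0$. Next I would compute $\dot D=\dot x\,y+x\dot y-\dot u$ and check that, after the cancellations, it collapses to
\[
\dot D=3y+\frac{y\big(y^2+|z-\bar z|^2\big)}{D}\ \geq\ 3y\ >\ 0\,,
\]
so $D(t)\geq D_0$. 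Since $D=xy-u\geq D_0>0$ keeps $x$ and $y$ bounded away from $0$, the solution remains a genuine Hermitian metric. For long-time existence, as $|z-\bar z|^2\geq0$ the first line of \eqref{InoueS-} gives $\dot x\leq 3+\tfrac{2y^2}{D}\leq 3+\tfrac{2y_0^2}{D_0}$, so $x$ grows at most linearly; together with $y\leq y_0$, $u\leq u_0$, $D\geq D_0$ this excludes a finite-time singularity, whence $T=+\infty$.

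For the asymptotics $x\sim 3t$ it suffices to prove $\dot x\to 3$, i.e. that both $\tfrac{2y^2}{D}$ and $\tfrac{u(|z-\bar z|^2+y^2)}{D^2}$ vanish; then $x(t)/t\to 3$ by Ces\`aro. Since $y,u$ are monotone and $D$ is increasing, put $y_\infty,u_\infty\ge0$ and $D_\infty\in(D_0,+\infty]$. Because $\dot D\geq 3y$, if $D_\infty<\infty$ then $\int_0^\infty y\,dt<\infty$ and hence $y_\infty=0$, while if $D_\infty=+\infty$ all three correction ratios die out since their numerators stay bounded. In either case $\tfrac{2y^2}{D}\to0$ and $\tfrac{uy^2}{D^2}\to0$, so the only term that needs real work is $\tfrac{u|z-\bar z|^2}{D^2}$, and only in the regime $D_\infty<\infty$ (hence $y\to0$).

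In that regime I would argue exactly as in Proposition~\ref{Inoue+}: assuming for contradiction that $\tfrac{|z-\bar z|}{D}\to\epsilon>0$, the relation $\dot y\asymp -y^2$ forces $y\sim 1/t$; then $x=(D+u)/y\asymp t$, and feeding this into $\dot u=-\tfrac{2y^2(x|z-\bar z|^2+yu)}{D^2}$ yields $\dot u\asymp -1/t$, so that $u\to-\infty$, contradicting $u=|z|^2\ge0$. This contradiction forces $\tfrac{u|z-\bar z|^2}{D^2}\to0$, completing $\dot x\to3$ and thus $x\sim 3t$.

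I expect the main obstacle to be precisely this last step. Compared with the $J_1$ case, $\dot x$ now carries the additional term $\tfrac{2y^2}{D}$ and $\dot u$ splits into an $x|z-\bar z|^2$ part and a $yu$ part, so I must control two ratios rather than one and separate the regimes $D_\infty=\infty$ and $D_\infty<\infty$. The genuinely delicate point is that, in the bounded-determinant regime, one must upgrade a mere lower bound on $\tfrac{|z-\bar z|}{D}$ into the sharp $1/t$ decay of $y$ that powers the $u\to-\infty$ contradiction, all the while remembering that $|z-\bar z|^2$ is an independent quantity (only constrained by $|z-\bar z|^2\le 4u$) and is \emph{not} governed by a closed system in $(x,y,u)$.
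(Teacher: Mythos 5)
Your proposal is correct and follows essentially the same route as the paper's proof: the same monotonicity facts ($\dot y,\dot u<0$, $\dot D\geq 3y$, $\dot x\leq 3+2y_0^2/D_0$ giving $T=+\infty$), the same dichotomy $D_\infty=+\infty$ versus $D_\infty<+\infty$ (with $y\to0$ in the latter case from $\int y\,dt<\infty$), and the same contradiction argument imported from Proposition~\ref{Inoue+} ($\dot y\asymp -y^2$ forces $y\asymp 1/t$, hence $\dot u\asymp -1/t$ and $u\to-\infty$) to kill the term $u|z-\bar z|^2/D^2$. As a side remark, your formula $\dot D=3y+\tfrac{y(y^2+|z-\bar z|^2)}{D}$ is the correct one, fixing a typographical $D^2$ in the paper's display \eqref{decr_S-}.
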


\begin{proof}
Let $T\in(0,+\infty]$ denote the maximal existence time of the solution. Then, a direct computation yields that
\begin{equation}\label{decr_S-} 
\begin{aligned} 
&\dot D = 3y+\frac{y(|z-\bar z|^2+y^2)}{D^2}\geq 0  \,\,, \\ 
\dot{y}<0 \,\,,& \quad \dot{u}<0 \quad\Longrightarrow\quad y(t)\leq y_0 \,\,, \quad u(t)\leq u_0 \,\,. 
\end{aligned}\end{equation}
On the other hand, since
$$
\dot x \leq 3+\frac{2y^2}{D} \leq 3+\frac{2y_0^2}{D_0}\,\,,
$$
we have $T=+\infty$ and the first part of the claim follows. To conclude the proof it is enough to show that
\begin{equation} \label{AAA}
\lim_{t\to \infty}\frac{2y^2}{D}=\lim_{t\to \infty}\frac{u(y^2+|z-\bar{z}|^2)}{D^2}=0 \,\, .
\end{equation}
By the means of \eqref{decr_S-}, we can have either 
$$
\lim_{t\rightarrow+\infty}D(t)=+\infty \qquad \text{or}\qquad \lim_{t\rightarrow+\infty}D(t)<+\infty\,\, .
$$
In the former case, \eqref{decr_S-} directly implies \eqref{AAA}. Let us assume then $\lim_{t\rightarrow+\infty}D(t)<+\infty$. By means of \eqref{decr_S-}, this implies $y(t)\to 0$ for $t\to\infty$, and hence $\lim_{t\to \infty}\frac{2y^2}{D}=0$. Moreover, using the same argument as in the proof of Proposition \ref{Inoue+}, one can prove that $\frac{|z-\bar{z}|}{D}\to0$ necessarily holds, and so we obtain \eqref{AAA}.
\end{proof}

In view of the above results, we have

\begin{prop}\label{GH_inoS+-} Let $X$ be an Inoue surface of type $S^\pm$ and $g(t)$ be a locally homogeneous solution to the HCF on $X$. Then
$$
\left(X,(1{+}t)^{-1}g(t)\right) \GHa S^1\big(\tfrac{\sqrt{3}}{2\pi}\big) \quad \text{as}\,\,\, t\to \infty \,\,,
$$ 
where $S^1\big(\tfrac{\sqrt{3}}{2\pi}\big)=\{z \in \C : |z|=\tfrac{\sqrt{3}}{2\pi} \}$ is the circle of length $\sqrt{3}$.
\end{prop}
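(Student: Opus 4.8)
The plan is to transcribe, almost word for word, the argument used for type $S^0$ in Proposition~\ref{GH_inoueS0}, which itself follows Tosatti--Weinkove \cite[Thm~1.6]{TW}. The first step is to recall the realization of an Inoue surface of type $S^\pm$ as a solvmanifold $X=\Gamma\backslash{\rm Sol}_1^4$. Unwinding the structure constants of both $J_1$ and $J_2$ in a real frame shows that $\mathfrak{sol}_1^4\cong\R\ltimes\mathfrak{n}$, where $\mathfrak{n}$ is the $3$-dimensional Heisenberg algebra spanned by $\mathrm{Im}\,Z_1$ together with $\mathrm{Re}\,Z_2$ and $\mathrm{Im}\,Z_2$ (with $\mathrm{Im}\,Z_2$ central), while the $\R$-factor is generated by $\mathrm{Re}\,Z_1$. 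Consequently $X$ fibers as $\pi\colon X\to S^1=\R/\Z$ over the circle generated by $\mathrm{Re}\,Z_1$, with fiber a compact $3$-dimensional nilmanifold (a circle bundle over a $2$-torus), see \cite{Ino}. Since by Propositions~\ref{Inoue+} and~\ref{Inoue-} both complex structures produce immortal solutions with the \emph{same} asymptotics $x(t)\sim 3t$ and $y(t),u(t)$ bounded, the two cases can be handled simultaneously.

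Next I would identify the limiting metric and the base circle. The $\R$-action descends to a section $\gamma\colon S^1\to X$ along $\mathrm{Re}\,Z_1$, and in complete analogy with \eqref{lengthcircle} its length is $\ell_{g(t)}(\gamma)=\sqrt{x(t)}$. Propositions~\ref{Inoue+} and~\ref{Inoue-} give
$$
(1{+}t)^{-1}g(t)\longrightarrow \tilde g_{\infty}=\left({\scalefont{0.8}\begin{array}{cc}\!3\!&\!0\!\\ \!0\!&\!0\!\end{array}}\right)\qquad\text{as }t\to\infty
$$
in the frame $\{Z_1,Z_2\}$, so that $\ell_{(1{+}t)^{-1}g(t)}(\gamma)=\sqrt{(1{+}t)^{-1}x(t)}\to\sqrt3$. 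This already pins down the candidate limit as the circle $S^1(\tfrac{\sqrt3}{2\pi})$ of length $\sqrt3$.

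It remains to prove that the fibers of $\pi$ collapse to points in the Gromov-Hausdorff sense. The kernel of $\tilde g_{\infty}$ is the distribution $\mathcal D$ spanned by $\mathrm{Re}\,Z_2$ and $\mathrm{Im}\,Z_2$, whose directions shrink pointwise under the normalized flow. The decisive geometric fact, analogous to \cite[Lemma~5.2]{TW}, is that $\mathcal D$ is \emph{dense} inside each nilmanifold fiber: indeed $\mathcal D$ contains the central direction $\mathrm{Im}\,Z_2$ and projects onto the line $\mathrm{Re}\,Z_2$ in the base $2$-torus of the central fibration, which is irrationally wound because of the eigenvalue data of the defining ${\rm SL}(2,\Z)$-matrix. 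Granting this density, any two points of a fiber are joined by $\mathcal D$-paths whose $(1{+}t)^{-1}g(t)$-length tends to $0$, so that the fibers have diameter going to $0$ even though the transverse direction $\mathrm{Im}\,Z_1$ survives pointwise.

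I expect the density statement, together with pinning down the precise lattice $\Gamma$ and the fibered structure of $S^\pm$, to be the technical heart of the proof; once it is in place, the pair $(\pi,\gamma)$ furnishes a GH $\epsilon$-approximation between $\big(X,(1{+}t)^{-1}g(t)\big)$ and $S^1(\tfrac{\sqrt3}{2\pi})$ for $t$ large, and the conclusion follows from the estimate recalled in Section~\ref{secGH} (see also \cite[Cor~3.18]{Bol}).
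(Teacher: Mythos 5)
Your proposal is correct and follows essentially the same route as the paper's proof: the fibration $\pi\colon X\to S^1$ with section $\gamma$ of length $\sqrt{x(t)}$, the limit $(1{+}t)^{-1}g(t)\to\operatorname{diag}(3,0)$ from Propositions \ref{Inoue+} and \ref{Inoue-}, density of the kernel distribution $\mathcal{D}=\operatorname{span}\{\mathrm{Re}\,Z_2,\mathrm{Im}\,Z_2\}$ in the nilmanifold fibers (the analogue of \cite[Lemma 6.2]{TW}, which the paper likewise cites rather than reproves), and the GH $\epsilon$-approximation via \cite[Cor 3.18]{Bol}. The only cosmetic differences are that you exhibit the splitting $\mathfrak{sol}_1^4\cong\R\ltimes\mathfrak{h}_3$ at the Lie-algebra level where the paper uses the explicit matrix model $G_+$ with its lattice $\Gamma_+$, and that the paper disposes of the $S^-$ case by passing to its unramified $S^+$ double cover rather than treating both complex structures uniformly as you do.
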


We briefly recall the construction of Inoue surfaces of type $S^+$. Let $N\in{\rm SL}(2,\Z)$ be a unimodular matrix with real positive eigenvalues given by $\lambda$ and $\lambda^{-1}$, with $\lambda>1$. It is well known that any $S^+$ surface can be realized as the quotient of the group
$$
G_+:=\left\{M_+(r,q,v,u):=\left({\scalefont{0.8} 
\begin{array}{ccc} \!1\! & \!u\! & \!v\! \\ \!0\! & \!q\! & \!r\! \\ \!0\! & \!0\! & \!1\! 
\end{array}}\right) \, : \quad r,v,u \in \R,\quad q\in\R_{>0} \right\} \,\, .
$$
by a lattice $\Gamma_+:=\langle f_0,f_1,f_2,f_3\rangle$, where $f_i\in G_+$ are defined starting from $N$ (see \cite{Ino}).

Notice that Inoue surfaces of type $S^\pm$ enjoy nearly the same properties of surfaces of type $S^0$ (see \cite{Ino}). In particular, they do not contain complex curves and any $S^+$ surface is diffeomorphic to a bundle over $S^1$. Moreover, since any $S^-$ surface admits an unramified double cover given by a $S^+$ surface, it is enough to prove the statement for Inoue surfaces of type $S^+$.

\begin{proof}[Proof of Proposition \ref{GH_inoS+-}] Let $X=\Gamma_+\backslash G_+$ be an Inoue surface of type $S^+$ and $g(t)$ a locally homogeneous solution to the HCF on $X$. The application
$$
G_+ \to \R \,\, , \quad M_+(r,q,v,u) \mapsto \tfrac{\log q}{\log \lambda}
$$ 
factorizes to a map $\pi: X \to S^1$, which is a locally trivial fibration (see \cite{Ino}). On the other hand, the path
$$
\R \to G_{+} \,\, , \quad s \mapsto M_+(0,\lambda^s,0,0)
$$ 
factorizes to a section $\gamma: S^1 \to X$ whose length with respect to $g(t)$ is 
\begin{equation*}
\ell_{g(t)}(\gamma)=\sqrt{x(t)} \,\, .
\label{lengthcircle-} \end{equation*} 
Now, in view of the above results
$$
(1{+}t)^{-1}g(t) \to \tilde{g}_{\infty} := \left({\scalefont{0.8} \begin{array}{cc} \!3\! & \!0\! \\ \!0\! & \!0\! \end{array}}\right) \quad \text{ as $t \to \infty$ }\,\,.
$$ 
Again, the kernel of $\tilde{g}_{\infty}$ is the integrable distribution $\mathcal D$ spanned by the real and imaginary part of $Z_2$, which is dense inside any fiber of $\pi$ (see \cite[Lemma 6.2]{TW}). Therefore, in analogy with the case of $S^0$ surfaces, the claim follows.
\end{proof}

We are now in position to prove Theorem \ref{theor_A} and Theorem \ref{theor_B}.

\begin{proof}[Proof of Theorem \ref{theor_A} and Theorem \ref{theor_B}] Let $X$ be a compact complex surface and $g_0$ a locally homogeneous non-K\"ahler metric on $X$. By Theorem \ref{theor-Wall} and Remark \ref{remKE} $X$ is a quotient $\Gamma\backslash G$, where  $G$ is one of the Lie groups listed in Section \ref{model_sub}, i.e.
$$
\widetilde{\rm SE}(2)\times\R\,\,,\quad {\rm SU(2)}\times\R\,\,, \quad \widetilde{\rm SL}(2,\R)\times\R\,\,, \quad \R \times {\rm H}_3(\R)\,\,, \quad \R \ltimes {\rm H}_3(\R)\,\,, \quad {\rm Sol}_0^4\,\,,\quad {\rm Sol}_1^4\,\, ,
$$
and $\Gamma\subset G$ is a co-compact lattice.

Let also $T\in(0,+\infty]$ be the extinction time of the HCF solution starting from $g_0$. Then, by means of Proposition \ref{prop_hyper}, Proposition \ref{prop_hopf}, Proposition \ref{prop_ell}, Proposition \ref{prop_K1}, Proposition \ref{prop_K2}, Proposition \ref{prop_S0}, Proposition \ref{Inoue+} and Proposition \ref{Inoue-}, we have ${T<\infty}$ if and only $G{={\rm SU(2)}\times\R}$. This implies Theorem \ref{theor_A}.

Finally, Theorem \ref{theor_B} comes from Proposition \ref{prop_hyper}, Proposition \ref{prop_ell_GH}, Proposition \ref{prop_K1}, Proposition \ref{prop_K2}, Proposition \ref{GH_inoueS0} and Proposition \ref{GH_inoS+-}.
\end{proof}

\section{Appendix}

In this Appendix, we explicitly write down the tensors $S$ and $Q^i$ used in Section \ref{model_sub} to obtain the HCF tensor $K$. We assume $G$ to be one of the (non-abelian) Lie groups listed in Section \ref{model_sub}, equipped with a left-invariant Hermitian structure $(J,g)$ as in \eqref{leftinvmetric}. Our results directly follow by the formulas given in Section \ref{prel} and the structure equations of $G$.

\subsubsection*{Hyperelliptic surfaces} \hfill \par

$$
\begin{aligned}
S_{1\bar 1}&=\tfrac{x^2|z|^2}{(xy-|z|^2)^2}\,\, ,		\qquad &S_{2\bar 2}&=\tfrac{xy|z|^2}{(xy-|z|^2)^2}\,\,,		\qquad &S_{1\bar 2}&=\tfrac{x^2yz}{(xy-|z|^2)^2}\,\, ,\\
Q^1_{1\bar 1}&=\tfrac{x^2|z|^2}{(xy-|z|^2)^2}\,\, ,	\qquad &Q^1_{2\bar 2}&=\tfrac{xy|z|^2}{(xy-|z|^2)^2}\,\,,	\qquad &Q^1_{1\bar 2}&=\tfrac{xz|z|^2}{(xy-|z|^2)^2}\,\, ,\\
Q^2_{1\bar 1}&=0\,\, ,						\qquad &Q^2_{2\bar 2}&=\tfrac{2|z|^2}{xy-|z|^2}\,\,,		\qquad &Q^2_{1\bar 2}&=0\,\, ,\\
Q^3_{1\bar 1}&=\tfrac{x^2|z|^2}{(xy-|z|^2)^2}\,\, ,	\qquad &Q^3_{2\bar 2}&=\tfrac{|z|^4}{(xy-|z|^2)^2}\,\,,	\qquad &Q^3_{1\bar 2}&=\tfrac{xz|z|^2}{(xy-|z|^2)^2}\,\, ,\\
Q^4_{1\bar 1}&=0\,\, ,						\qquad &Q^4_{2\bar 2}&=\tfrac{|z|^2}{xy-|z|^2}\,\,,		\qquad &Q^4_{1\bar 2}&=0\,\, .
\end{aligned}
$$

\subsubsection*{Hopf surfaces} \hfill \par

$$\begin{aligned}
S_{1\bar 1}&=\tfrac{x (x^3(1+\lambda^2)+|z|^2(2x+y) )}{(xy-|z|^2)^2}\,\,,\\
S_{2\bar 2}&=\tfrac{-(1+\lambda^2)x^2(xy-2|z|^2)-4|z|^2(xy-|z|^2)+y^2(2x^2+|z|^2)}{(xy-|z|^2)^2}\,\,,\\
S_{1\bar 2}&=\tfrac{xz (-i\lambda(xy-|z|^2)+x^2(1+\lambda^2)+y(x+y)+|z|^2 )}{(xy-|z|^2)^2}\,\,,\\
Q^1_{1\bar 1}&=\tfrac{x ((1+\lambda^2)x^3(xy-|z|^2)^2+y|z|^2(x^2y^2-|z|^4)+(x+y)(xy-2|z|^2)|z|^4+2x^2y(xy-|z|^2) )}{(xy-|z|^2)^4}\,\,,\\
Q^1_{2\bar 2}&=\tfrac{y ((1+\lambda^2)x^3(xy-|z|^2)^2+y|z|^2(x^2y^2-|z|^4)+(x+y)(xy-2|z|^2)|z|^4+2x^2y(xy-|z|^2) )}{(xy-|z|^2)^4}\,\,,\\
Q^1_{1\bar 2}&=\tfrac{z ((1+\lambda^2)x^3+(2x+y)|z|^2 )}{(xy-|z|^2)^2}\,\,,\\
Q^2_{1\bar 1}&=\tfrac{2|z|^2}{xy-|z|^2}\,\,,\\
Q^2_{2\bar 2}&=\tfrac{2(1+\lambda^2)x^2}{xy-|z|^2}\,\,,\\
Q^2_{1\bar 2}&=\tfrac{-2xy(1+i\lambda)}{xy-|z|^2}\,\,,\\
Q^3_{1\bar 1}&=\tfrac{(1+\lambda^2)x^4+(2x^2+|z|^2)|z|^2}{(xy-|z|^2)^2}\,\,,\\
Q^3_{2\bar 2}&=\tfrac{(1+\lambda^2)x^2+(2x+y)y|z|^2}{(xy-|z|^2)^2}\,\,,\\
Q^3_{1\bar 2}&=\tfrac{z ((1+\lambda^2)x^3+i\lambda x+(x+y)|z|^2+x^2y )}{(xy-|z|^2)^2}\,\,,\\
Q^4_{1\bar 1}&=\tfrac{|z|^2}{xy-|z|^2}\,\,,\\
Q^4_{2\bar 2}&=\tfrac{(1+\lambda^2)x^2}{xy-|z|^2} \,\,,\\
Q^4_{1\bar 2}&=\tfrac{-(1+i\lambda)xz}{xy-|z|^2} \,\,.
\end{aligned}$$
Here, $\lambda\in\R$ denotes the parameter of the family of complex structures related to Hopf surfaces.

%
%

\subsubsection*{Non-K\"ahler properly elliptic surfaces} \hfill \par

$$\begin{aligned}
S_{1\bar 1} &= \tfrac{-y (2x+(1+\lambda^2)y )(xy-|z|^2)+ ((x+y)^2+a^2y^2-4 )|z|^2}{(xy-|z|^2)^2} \,\,,\\
S_{2\bar 2} &= \tfrac{y ((1+\lambda^2)y^3+(x-2y)|z|^2 )}{(xy-|z|^2)^2}\,\,, \\
S_{1\bar 2} &= \tfrac{yz ((1+i\lambda)(xy-|z|^2)+x^2-2xy+(1+\lambda^2)y^2 )}{(xy-|z|^2)^2} \,\,,\\
Q^1_{1\bar 1} &= \tfrac{x ((1+\lambda^2)y^3+(x-2y)|z|^2 )}{(xy-|z|^2)^2}\,\,, \\
Q^1_{2\bar 2} &= \tfrac{y ((1+\lambda^2)y^3+(x-2y)|z|^2 )}{(xy-|z|^2)^2}\,\,, \\
Q^1_{1\bar 2} &= \tfrac{z ((1+\lambda^2)y^3+(x-2y)|z|^2 )}{(xy-|z|^2)^2}\,\,, \end{aligned}$$
$$\begin{aligned}
Q^2_{1\bar 1} &= \tfrac{2y^2(1+\lambda^2)}{xy-|z|^2}\,\,, \\
Q^2_{2\bar 2} &= \tfrac{2|z|^2}{xy-|z|^2}\,\,,\\
Q^2_{1\bar 2} &= \tfrac{2(1+i\lambda)yz}{xy-|z|^2}\,\,, \\
Q^3_{1\bar 1} &= \tfrac{ ((1+\lambda^2)y^2+x(x-2y) )|z|^2}{(xy-|z|^2)^2}\,\,, \\
Q^3_{2\bar 2} &= \tfrac{(1+\lambda^2)y^4+|z|^2(|z|^2 -2y^2)}{(xy-|z|^2)^2}\,\,,\\
Q^3_{1\bar 2} &= \tfrac{z ((1-i\lambda)y(xy-|z|^2) +(1+\lambda^2)y^3 +x|z|^2 )}{(xy-|z|^2)^2}\,\,,\\
Q^4_{1\bar 1} &= \tfrac{y^2(1+\lambda^2)}{xy-|z|^2}\,\,, \\
Q^4_{2\bar 2} &= \tfrac{|z|^2}{xy-|z|^2}\,\,, \\
Q^4_{1\bar 2} &= \tfrac{1+i\lambda}{xy-|z|^2}\,\,.\\
\end{aligned}$$
Here, $\lambda\in\R$ denotes the parameter of the family of complex structures related to non-K\"ahler properly elliptic surfaces.

\subsubsection*{Primary Kodaira surfaces} \hfill \par
$$
\begin{aligned}
S_{1\bar 1}&=\tfrac{-y^2(xy-2|z|^2)}{(xy-|z|^2)^2}\,\, ,		\qquad &S_{2\bar 2}&=\tfrac{y^4}{(xy-|z|^2)^2}\,\,,		\qquad &S_{1\bar 2}&=\tfrac{y^3z}{(xy-|z|^2)^2}\,\, ,\\
Q^1_{1\bar 1}&=\tfrac{xy^3}{(xy-|z|^2)^2}\,\, ,	\qquad &Q^1_{2\bar 2}&=\tfrac{y^4}{(xy-|z|^2)^2}\,\,,	\qquad &Q^1_{1\bar 2}&=\tfrac{y^3z}{(xy-|z|^2)^2} \,\, ,\\
Q^2_{1\bar 1}&= \tfrac{2y^2}{xy-|z|^2}\,\, ,			\qquad &Q^2_{2\bar 2}&=0\,\,,		\qquad &Q^2_{1\bar 2}&=0\,\, ,\\
Q^3_{1\bar 1}&=\tfrac{y^2|z|^2}{(xy-|z|^2)^2} \,\, ,	\qquad &Q^3_{2\bar 2}&=\tfrac{y^4}{(xy-|z|^2)^2}\,\,,	\qquad &Q^3_{1\bar 2}&=\tfrac{y^3z}{(xy-|z|^2)^2}\,\, ,\\
Q^4_{1\bar 1}&=\tfrac{y^2}{xy-|z|^2}\,\, ,						\qquad &Q^4_{2\bar 2}&=0\,\,,		\qquad &Q^4_{1\bar 2}&=0\,\, .
\end{aligned}
$$
%

\subsubsection*{Secondary Kodaira surfaces} \hfill \par
$$
\begin{aligned}
S_{1\bar 1}&=\tfrac{(x^2+y^2)|z|^2-y^2(xy-|z|^2)}{(xy-|z|^2)^2}\,\, ,		\qquad &S_{2\bar 2}&=\tfrac{y(x|z|^2+y^3)}{(xy-|z|^2)^2}\,\,,		\qquad &S_{1\bar 2}&=\tfrac{(x^2+y^2)+i(xy-|z|^2)}{(xy-|z|^2)^2}\,\, ,\\
Q^1_{1\bar 1}&=\tfrac{x(x|z|^2+y^3)}{(xy-|z|^2)^2}\,\, ,	\qquad &Q^1_{2\bar 2}&=\tfrac{y(x|z|^2+y^3)}{(xy-|z|^2)^2}\,\,,	\qquad &Q^1_{1\bar 2}&=\tfrac{z(x|z|^2+y^3)}{(xy-|z|^2)^2} \,\, ,\\
Q^2_{1\bar 1}&= \tfrac{2y^2}{xy-|z|^2}\,\, ,			\qquad &Q^2_{2\bar 2}&=\tfrac{2|z|^2}{xy-|z|^2}\,\,,		\qquad &Q^2_{1\bar 2}&=\tfrac{2iyz}{xy-|z|^2}\,\, ,\\
Q^3_{1\bar 1}&=\tfrac{(x^2+y^2)|z|^2}{(xy-|z|^2)^2} \,\, ,	\qquad &Q^3_{2\bar 2}&=\tfrac{y^4+|z|^4}{(xy-|z|^2)^2}\,\,,	\qquad &Q^3_{1\bar 2}&=\tfrac{z(x+iy)(|z|^2-iy^2)}{(xy-|z|^2)^2}\,\, ,\\
Q^4_{1\bar 1}&=\tfrac{y^2}{xy-|z|^2}\,\, ,						\qquad &Q^4_{2\bar 2}&=\tfrac{|z|^2}{xy-|z|^2}\,\,,		\qquad &Q^4_{1\bar 2}&=\tfrac{iyz}{xy-|z|^2}\,\, .
\end{aligned}
$$

\subsubsection*{Inoue surfaces of type $S^0$} \hfill \par

$$\begin{aligned}
S_{1\bar 1} &= \tfrac{x ((b^2+9a^2)|z|^2+4a^2(xy-|z|^2) )}{(xy-|z|^2)^2} \,\,, \\
S_{2\bar 2} &= \tfrac{xy ((b^2+9a^2)|z|^2-8a^2(xy-|z|^2) )}{(xy-|z|^2)^2} \,\,, \\
S_{1\bar 2} &= \tfrac{xz ((b^2+9a^2)xy -2a(a+ib)(xy-|z|^2) )}{(xy-|z|^2)^2} \,\,, \\
Q^1_{1\bar 1} &= \tfrac{x^2 ((b^2+9a^2)|z|^2+4a^2(xy-|z|^2) )}{(xy-|z|^2)^2} \,\,, \\
Q^1_{2\bar 2} &= \tfrac{xy ((b^2+9a^2)|z|^2+4a^2(xy-|z|^2) )}{(xy-|z|^2)^2} \,\,, \\
Q^1_{1\bar 2} &= \tfrac{xz ((b^2+9a^2)|z|^2+4a^2(xy-|z|^2) )}{(xy-|z|^2)^2} \,\,, \\
Q^2_{1\bar 1} &= \tfrac{8a^2x^2}{xy-|z|^2} \,\,, \end{aligned}$$
$$\begin{aligned}
Q^2_{2\bar 2} &= \tfrac{2(b^2+a^2)|z|^2}{xy-|z|^2}\,\,,  \\
Q^2_{1\bar 2} &= \tfrac{-4a(a+ib)}{xy-|z|^2} \,\,, \\
Q^3_{1\bar 1} &= \tfrac{(b^2+9a^2)x^2|z|^2}{(xy-|z|^2)^2} \,\,, \\
Q^3_{2\bar 2} &= \tfrac{(b^2+9a^2)|z|^4 +4a^2(xy+2|z|^2)(xy-|z|^2)}{(xy-|z|^2)^2} \,\,, \\
Q^3_{1\bar 2} &= \tfrac{xz ((b^2+9a^2)|z|^2 +2a(3a+ib)(xy-|z|^2) )}{(xy-|z|^2)^2} \,\,, \\
Q^4_{1\bar 1} &= \tfrac{4a^2x^2}{xy-|z|^2} \,\,, \\
Q^4_{2\bar 2} &= \tfrac{(b^2+a^2)|z|^2}{xy-|z|^2}\,\,,  \\
Q^4_{1\bar 2} &= \tfrac{-2a(a+ib)}{xy-|z|^2} \,\,. \\
\end{aligned}$$
Here, $a,b\in\R$ denotes the parameters of the family of complex structures on Inoue surfaces of type $S^0$.

\subsubsection*{Inoue surfaces of type $S^{\pm}$} \hfill \par

For what concerns the complex structure $J_1$ on Inoue surfaces of type $S^\pm$, we get

$$\begin{aligned}
S_{1\bar 1} &= \tfrac{-2(xy-|z|^2)^2-xy(xy-|z|^2)-(z^2+\bar{z}^2)|z|^2+2xy|z|^2}{(xy-|z|^2)^2}\,\,, \\
S_{2\bar 2} &= \tfrac{y^2 (xy+|z|^2-(z^2+\bar{z}^2) )}{(xy-|z|^2)^2}\,\,, \\
S_{1\bar 2} &= \tfrac{xy^2(z-\bar{z})+yz(xy-z^2)}{(xy-|z|^2)^2}\,\,, \\
Q^1_{1\bar 1} &= \tfrac{xy (xy+|z|^2-(z^2+\bar{z}^2) )}{(xy-|z|^2)^2}\,\,, \\
Q^1_{2\bar 2} &= \tfrac{y^2 (xy+|z|^2-(z^2+\bar{z}^2) )}{(xy-|z|^2)^2} \,\,,\\
Q^1_{1\bar 2} &= \tfrac{yz ((z-\bar{z})|z|^2+xyz-z^3 )}{(xy-|z|^2)^2}\,\,, \\
Q^2_{1\bar 1} &= \tfrac{2|z|^2}{xy-|z|^2} \,\,,\\
Q^2_{2\bar 2} &= \tfrac{2y^2}{xy-|z|^2}\,\,, \\
Q^2_{1\bar 2} &= \tfrac{2y\bar{z}}{xy-|z|^2}\,\,, \\
Q^3_{1\bar 1} &= \tfrac{x^2y^2-xy(z^2+\bar{z}^2)+|z|^4}{(xy-|z|^2)^2} \,\,,\\
Q^3_{2\bar 2} &= \tfrac{y^2 (2|z|^2-(z^2+\bar{z}^2) )}{(xy-|z|^2)^2}\,\,, \\
Q^3_{1\bar 2} &= \tfrac{y(xy-z^2)(z-\bar{z})}{(xy-|z|^2)^2} \,\,,\\
Q^4_{1\bar 1} &= \tfrac{|z|^2}{xy-|z|^2} \,\,,\\
Q^4_{2\bar 2} &= \tfrac{y^2}{xy-|z|^2} \,\,,\\
Q^4_{1\bar 2} &= \tfrac{y\bar{z}}{xy-|z|^2}\,\,. \\
\end{aligned}$$

On the other hand, given the complex structure $J_2$ on Inoue surfaces of type $S^+$, we get

$$\begin{aligned}
S_{1\bar 1} &= \tfrac{xy^2(4x-y) -(2|z|^2 -2y^2 +z^2+\bar{z}^2)|z|^2 -y(7x-(z+\bar{z}))(xy-|z|^2)}{(xy-|z|^2)^2}\,\,, \\
S_{2\bar 2} &= \tfrac{y^2 (|z|^2+y^2+xy-(z^2+\bar{z}^2) )}{(xy-|z|^2)^2} \,\,,\\
S_{1\bar 2} &= \tfrac{y^2(xy-|z|^2)+xy^2(2z^2-\bar{z})+zy(y^2-z^2)}{(xy-|z|^2)^2}\,\,, \end{aligned}$$
$$\begin{aligned}
Q^1_{1\bar 1} &= \tfrac{xy (|z|^2-(z^2+\bar{z}^2)+y(x+y) )}{(xy-|z|^2)^2} \,\,,\\
Q^1_{2\bar 2} &= \tfrac{y^2 (|z|^2-(z^2+\bar{z}^2)+y(x+y) )}{(xy-|z|^2)^2} \,\,,\\
Q^1_{1\bar 2} &= \tfrac{y ((z-\bar{z})|z|^2 +xyz +y^2z -z^3 )}{(xy-|z|^2)^2} \,\,,\\
Q^2_{1\bar 1} &= \tfrac{2(|z|^2 +y(z+\bar{z}) +y^2 ))}{xy-|z|^2} \,\,,\\
Q^2_{2\bar 2} &= \tfrac{2y^2}{xy-|z|^2} \,\,,\\
Q^2_{1\bar 2} &= \tfrac{2y(y+\bar{z})}{xy-|z|^2}\,\,, \\
Q^3_{1\bar 1} &= \tfrac{2xy|z|^2-xy(z^2+\bar{z}^2)+y^2|z|^2 +(xy-y(z+\bar{z})-|z|^2)(xy-|z|^2)}{(xy-|z|^2)^2}\,\,, \\
Q^3_{2\bar 2} &= \tfrac{y^2 (2|z|^2-(z^2+\bar{z}^2)+y^2 )}{(xy-|z|^2)^2} \,\,,\\
Q^3_{1\bar 2} &= \tfrac{y (z|z|^2 +y|z|^2 -xy(z-\bar{z}) -z^3 +y^2z -xy^2 )}{(xy-|z|^2)^2}\,\,, \\
Q^4_{1\bar 1} &= \tfrac{|z|^2+y(z+\bar{z})+y^2}{xy-|z|^2} \,\,,\\
Q^4_{2\bar 2} &= \tfrac{y^2}{xy-|z|^2} \,\,,\\
Q^4_{1\bar 2} &= \tfrac{y(y+\bar{z})}{xy-|z|^2} \,\,.\\
\end{aligned}$$


\bigskip\bigskip
\font\smallsmc = cmcsc8
\font\smalltt = cmtt8
\font\smallit = cmti8
\hbox{\parindent=0pt\parskip=0pt
\vbox{\baselineskip 9.5 pt \hsize=5truein
\obeylines
{\smallsmc
Dipartimento di Matematica e Informatica ``Ulisse Dini'', Universit$\scalefont{0.55}{\text{\Aac}}$ di Firenze
Viale Morgagni 67/A, 50134 Firenze, ITALY}
\smallskip
{\smallit E-mail adress}\/: {\smalltt francesco.pediconi@unifi.it
}
}
}

\bigskip\bigskip
\font\smallsmc = cmcsc8
\font\smalltt = cmtt8
\font\smallit = cmti8
\hbox{\parindent=0pt\parskip=0pt
\vbox{\baselineskip 9.5 pt \hsize=5truein
\obeylines
{\smallsmc
Dipartimento di Matematica ``Giuseppe Peano'', Universit$\scalefont{0.55}{\text{\Aac}}$ di Torino
Via Carlo Alberto 10, 10123 Torino, ITALY}
\smallskip
{\smallit E-mail adress}\/: {\smalltt mattia.pujia@unito.it
}
}
}


\begin{thebibliography}{12}

\bibitem{Ang}
{\sc D. Angella, T. Sferruzza}, Geometric formalities along the Chern-Ricci flow, {\em Complex Anal. Oper. Theory} {\bf 14} (1) (2020).

\bibitem{AL} 
{\sc R. Arroyo, R. Lafuente}, The long-time behaviour of the homogeneous pluriclosed flow, {\em Proc. London Math. Soc.} {\bf 119} (1) (2019), 266--289.

\bibitem{Bol}
{\sc J. Boling},  Homogeneous solutions of pluriclosed flow on closed complex surfaces. {\em J. Geom. Anal.} {\bf 26} (3) (2016), 2130--2154.  

\bibitem{BBI}
{\sc D. Burago, Y. Burago, S. Ivanov}, A course in Metric Geometry, {\it American Mathematical Society}, {\it Providence, RI}, 2001.

\bibitem{CT}
{\sc X.X. Chen, G. Tian}, Ricci flow on K\"ahler-Einstein surfaces, {\em Invent. Math.}, {\bf 147} (2002), 487--544.

\bibitem{EFV2} 
{\sc N. Enrietti, A. Fino and L. Vezzoni}, The pluriclosed flow on nilmanifolds and Tamed symplectic forms, {\em J. Geom. Anal.}, {\bf 25} (2) (2015),  883--909.

\bibitem{FTWZ}
{\sc S. Fang, V. Tosatti, B. Weinkove, T. Zheng}, Inoue surfaces and the Chern-Ricci flow, {\em J. Funct. Anal.}, {\bf 271}, no.11, (2016), 3162-3185.


\bibitem{Ino}
{\sc M. Inoue}, On surfaces of Class $VII_0$, {\em Invent. Math.} {\bf 24} (1974), 269--310.

\bibitem{LPV}
{\sc R. Lafuente, M. Pujia, L. Vezzoni}, Hermitian curvature flow on unimodular Lie groups and static invariant metrics, {\em Trans. Amer. Math. Soc.}, {\bf 363} (6), (2020), 3967-3993.

\bibitem{Lott}
{\sc J. Lott}, On the long-time behavior of type-III Ricci flow solutions, {\em Math. Ann.} {\bf 339} (2007), 627--666.

\bibitem{PodPan}
{\sc F. Panelli, F. Podest\`a}, Hermitian Curvature Flow on compact homogeneous spaces, {\em J. Geom. Anal.} (2019), {\tt doi:10.1007/s12220-019-00239-7}.

\bibitem{P-DGA}
{\sc M. Pujia}, Expanding solitons to the Hermitian curvature flow on complex Lie groups, {\em Diff. Geom. Appl.} {\bf 64} (2019), 201--216.

\bibitem{P20}
{\sc M. Pujia}, Positive Hermitian curvature flow on complex 2-step nilpotent Lie groups, {\tt arXiv:2002.07210}.

\bibitem{PV}
{\sc M. Pujia, L. Vezzoni},  A remark on the Bismut-Ricci form on 2-step nilmanifolds, {\em C. R. Math. Acad. Sci. Paris} {\bf 356} (2018), 222--226.

\bibitem{Rong}
{\sc X. Rong}, Convergence and collapsing theorems in Riemannian geometry, {\em Handbook of geometric analysis, No. 2, Advanced Lectures in Mathematics}, Int. Press, Somerville, MA, 2010, 193--299.

\bibitem{Singer}
{\sc I. M. Singer}, Infinitesimally homogeneous spaces, {\em Comm. Pure Appl. Math.} {\bf 13} (1960), 685--697.

\bibitem{Song-T}
{\sc J. Song, G. Tian}, The K\"ahler–Ricci flow on surfaces of positive Kodaira dimension, {\em Invent. math.} {\bf 170} (2007), 609--653.

%

\bibitem{S3}
{\sc J. Streets}, Pluriclosed flow on manifolds with globally generated bundles, \emph{Complex Manifolds}, {\bf 3}, no. 1 (2016).

\bibitem{S-solitons}
{\sc J. Streets}, Classification of solitons for pluriclosed flow on complex surfaces, {\em Math. Ann} {\bf 375} (2019), 1555--1595.

\bibitem{S-conjecture}
{\sc J. Streets}, Pluriclosed flow and the geometrization of complex surfaces, {\tt arXiv:1808.09490}. 

\bibitem{ST}
{\sc J. Streets,  G. Tian}, A parabolic flow of pluriclosed metrics, \emph{Int. Math. Res. Not. IMRN} 2010, no. 16, 3101--3133.

\bibitem{ST2}
{\sc J. Streets,  G. Tian},  Hermitian curvature flow, \emph{J. Eur. Math. Soc.} (JEMS) {\bf 13} (2011), no. 3, 601--634.
%

\bibitem{ST4}
{\sc J. Streets,  G. Tian}, Regularity results for the pluriclosed flow, {\em Geom. $\&$ Top.}  {\bf 17} (2013) 2389--2429.

\bibitem{TZ}
{\sc G. Tian, Z. Zhang}, On the K\"ahler-Ricci Flow on Projective Manifolds of General Type, {\em Chin. Ann. Math.} {\bf 27 B}, no. 2 (2006), 179--192.

\bibitem{TW0}
{\sc V. Tosatti, B. Weinkove}, On the evolution of a Hermitian metric by its Chern-Ricci form, {\em J. Diff. Geom.}, {\bf 99} (2015), 125--163.

\bibitem{TW}
{\sc V. Tosatti, B. Weinkove}, The Chern-Ricci flow on complex surfaces, {\em Compositio Math.}, {\bf 149} (2013), 2101--2138.

\bibitem{TWY}
{\sc V. Tosatti, B. Weinkove, X. Yang}, Collapsing of the Chern-Ricci flow on elliptic surfaces, {\em Math. Ann.}, {\bf 362}, no. 3-4, (2015), 1223--1271.

\bibitem{yuri}
{\sc Y. Ustinovskiy}, Hermitian curvature flow on manifolds with non-negative Griffiths curvature, {\em Amer. J. Math.} {\bf 141} (6) (2019), 1751--1775. 
 
\bibitem{yuri2}
{\sc Y. Ustinovskiy}, Hermitian curvature flow on complex homogeneous manifolds, {\em Ann. Scuola Norm. Sup. Pisa Cl. Sci.} (2019), {\tt doi:10.2422/2036-2145.201903$\_$011}. 

\bibitem{Wall}
{\sc C. T. C. Wall}, Geometries and geometric structures in real dimension 4 and complex dimension 2, \emph{Geometry and topology} (College Park, Md., 1983/84), Lecture Notes in Math., vol.1167, Springer, Berlin, 1985, 268--292.

\bibitem{Wall2}
{\sc C. T. C. Wall}, Geometric structures on compact complex analytic surfaces, \emph{Topology}, {\bf 25} (1986), 119--153.

\end{thebibliography}
\end{document}